\documentclass[a4paper]{amsart}
\usepackage{enumerate}
\usepackage{fourier}
\usepackage{hyperref}
\usepackage{mathrsfs}
\usepackage{tikz}
\usetikzlibrary{arrows,%
  decorations.markings,%
  matrix,%
  shapes}

\hypersetup{pdftitle={Higher n-angulations from local rings},%
  pdfauthor={Petter Andreas Bergh, Gustavo Jasso, and Marius Thaule},%
  pdfkeywords={Triangulated categories, $n$-angulated categories,
    $n$-exact categories, algebraic $n$-angulated categories}
}

\DeclareMathOperator{\der}{D}
\DeclareMathOperator{\Hom}{Hom}
\DeclareMathOperator{\id}{id}
\let\Im\relax
\DeclareMathOperator{\Im}{Im}

\let\mod\relax
\DeclareMathOperator{\mod}{mod}
\newcommand{\C}{\mathscr{C}}

\newcommand{\ceq}{\mathrel{\mathop:}=}
\newcommand{\diagram}[3]{\matrix (#1) [matrix of math nodes,row
  sep={#2},column sep={#3},text height=1.5ex,text depth=0.25ex]}
\newcommand{\E}{\mathscr{E}}
\newcommand{\m}{\mathfrak{m}}
\newcommand{\nang}{\mathscr{N}}
\newcommand{\sE}{\underline{\E}}

\newcommand{\X}{\mathcal{X}}
\newcommand{\Z}{\mathbb{Z}}

\theoremstyle{definition}
\newtheorem*{definition}{Definition}
\newtheorem*{notation}{Notation}
\newtheorem*{example}{Example}
\newtheorem{remark}[subsection]{Remark}

\theoremstyle{plain}
\newtheorem{theorem}[subsection]{Theorem}
\newtheorem{lemma}[subsection]{Lemma}
\newtheorem{proposition}[subsection]{Proposition}
\newtheorem{corollary}[subsection]{Corollary}

%
%
\title{Higher $n$-angulations from local rings}
\author{Petter Andreas Bergh}
\address[Bergh]{Department of Mathematical Sciences, NTNU, N-7491 Trondheim,
  Norway}
\email{bergh@math.ntnu.no}
\urladdr{http://www.math.ntnu.no/~bergh/}
\author{Gustavo Jasso}
\address[Jasso]{Mathematik Zentrum, University of Bonn, Endenicher Allee 60,
  D-53115 Bonn, Germany}
\email{gjasso@math.uni-bonn.de}
\urladdr{https://gustavo.jasso.info}
\author{Marius Thaule}
\address[Thaule]{Department of Mathematical Sciences, NTNU, N-7491
  Trondheim, Norway}
\email{mariusth@math.ntnu.no}
\urladdr{http://www.math.ntnu.no/~mariusth/}
\date{\today}

\subjclass[2010]{18E30}
\keywords{Triangulated categories, $n$-angulated categories, $n$-exact
  categories, algebraic $n$-angulated categories}

%
%
\begin{document}

\begin{abstract}
  We show that the category of finitely generated free modules over
  certain local rings is $n$-angulated for every $n \geq 3$.  In fact,
  we construct several classes of $n$-angles, parametrized by
  equivalence classes of units in the local rings. Finally, we show
  that for odd values of $n$ some of these $n$-angulated categories
  are not algebraic.
\end{abstract}

\maketitle

\section{Introduction}
\label{sec:intro}

In \cite{MSS}, Muro, Schwede and Strickland proved that the category
of finitely generated free modules over a local commutative ring $R$
with maximal ideal $\m = (2) \neq 0$ and $\m^2 = 0$ is triangulated
with $\Sigma = \id$, where the collection of distinguished triangles
is given by the contractible triangles, the triangle
\begin{equation*}
  R \xrightarrow{2} R \xrightarrow{2} R \xrightarrow{2} R
\end{equation*}
and direct sums of these.  Moreover, they proved that this particular
triangulated category is neither algebraic nor topological.  For a
discussion on algebraic and topological triangulated categories, see
e.g.\ \cite{S1, S2, S3}.

Geiss, Keller and Oppermann recently introduced in \cite{GKO} ``higher
dimensional'' analogues of triangulated categories, called
\emph{$n$-angulated categories}, and showed that certain cluster
tilting subcategories of triangulated categories give rise to
$n$-angulated categories.  For $n = 3$, an $n$-angulated category is
the same as a classical triangulated category. The theory of
$n$-angulated categories has since been developed further: an
equivalent set of axioms was given in \cite{BT1}, and a generalization
of Thomason's classification theorem for triangulated subcategories
was proved in \cite{BT2}.

In this paper, we show that for a commutative local ring with
principal maximal ideal squaring to zero, the category of finitely
generated free modules is $n$-angulated for every $n \geq 3$. More
precisely, for such a ring and any integer $n \geq 3$, we construct a
class of $n$-angles in the category of finitely generated free
modules, drawing upon the work of Muro, Schwede and Strickland
mentioned above. In fact, we construct several classes of $n$-angles
on the same suspended category, parametrized in terms of a certain
equivalence relation on the set of units in the local ring. The
different classes arise from global automorphisms on the underlying
category, introduced by Balmer in \cite{Balmer}. Thus we obtain
examples where there are infinitely many classes of $n$-angles,
finitely many classes, and only one class.

In \cite{J}, the second author introduced the notion of algebraic
$n$-angulated categories. In analogy with the case of triangulated
categories, these are the $n$-angulated categories that are equivalent
to stable categories of ``higher dimensional'' Frobenius exact
categories. We show that for certain local rings, the $n$-angulated
categories we construct are not algebraic. As mentioned above, this
was done in \cite{MSS} in the triangulated case, but there it was also
proved that the triangulated category in question is not even
topological. However, at the time of writing, there is no notion of
``topological'' $n$-angulated categories to compare with in the higher
setting.

\sloppy The paper is organized as follows: in Section \ref{sec:pre} we
recall the definition of an $n$-angulated category and prove some
elementary results on homotopic $n$-$\Sigma$-sequences, and in Section
\ref{sec:example} we prove our main result. Then in Section
\ref{sec:properties}, we show that there are in general several
$n$-angulations on the underlying suspended category. Finally, in
Section \ref{sec:algebraic}, we show that for certain local rings, the
$n$-angulated categories we construct are not algebraic (for odd
values of $n$).

\section{Preliminaries}
\label{sec:pre}

We recall the definition of an $n$-angulated category from \cite{GKO}.
Let $\C$ be an additive category with an automorphism $\Sigma \colon
\C \to \C$, and $n$ an integer greater than or equal to three.  An
\emph{$n$-$\Sigma$-sequence} in $\C$ is a sequence
\begin{equation*}
  A_1 \xrightarrow{\alpha_1} A_2 \xrightarrow{\alpha_2} \cdots
  \xrightarrow{\alpha_{n - 1}} A_n \xrightarrow{\alpha_n} \Sigma A_1 
\end{equation*}
of objects and morphisms in $\C$.  We shall often denote such
sequences by $A_\bullet, B_\bullet$ etc.  Its left and right
\emph{rotations} are the two $n$-$\Sigma$-sequences
\begin{equation*}
  A_2 \xrightarrow{\alpha_2} A_3 \xrightarrow{\alpha_3} \cdots
  \xrightarrow{\alpha_n} \Sigma A_1 \xrightarrow{(-1)^n
    \Sigma\alpha_1} \Sigma A_2
\end{equation*}
and
\begin{equation*}
  \Sigma^{-1}A_n \xrightarrow{(-1)^n \Sigma^{-1} \alpha_n} A_1
  \xrightarrow{\alpha_1} \cdots \xrightarrow{\alpha_{n - 2}} A_{n - 1}
  \xrightarrow{\alpha_{n - 1}} A_n
\end{equation*} 
respectively, and it is \emph{exact} if the induced sequence
\begin{align*}
  \cdots \to \Hom_{\C}(B,A_1) & \xrightarrow{(\alpha_1)_*}
  \Hom_{\C}(B,A_2) \xrightarrow{(\alpha_2)_*} \cdots\\
  & \cdots \xrightarrow{(\alpha_{n - 1})_*} \Hom_{\C}(B,A_n)
  \xrightarrow{(\alpha_n)_*} \Hom_{\C}(B, \Sigma A_1) \to \cdots
\end{align*}
of abelian groups is exact for every object $B \in \C$. A
\emph{trivial} $n$-$\Sigma$-sequence is a sequence of the form
\begin{equation*}
  A \xrightarrow{1} A \to 0 \to \cdots \to 0 \to \Sigma A
\end{equation*}
or any of its rotations.  

A \emph{morphism} $A_{\bullet} \xrightarrow{\varphi} B_{\bullet}$ of
$n$-$\Sigma$-sequences is a sequence $\varphi =
(\varphi_1,\varphi_2,\ldots,\varphi_n)$ of morphisms in $\C$ such that
the diagram
\begin{center}
  \begin{tikzpicture}
    \diagram{d}{2.5em}{2.5em}{
      A_1 & A_2 & \cdots & A_n & \Sigma A_1\\
      B_1 & B_2 & \cdots & B_n & \Sigma B_1\\
    };

    \path[->,midway,font=\scriptsize]
      (d-1-1) edge node[above]{$\alpha_1$} (d-1-2)
                   edge node[right]{$\varphi_1$} (d-2-1)
      (d-1-2) edge node[above]{$\alpha_2$} (d-1-3)
                   edge node[right]{$\varphi_2$} (d-2-2)
      (d-1-3) edge node[above]{$\alpha_{n - 1}$} (d-1-4)
      (d-1-4) edge node[above]{$\alpha_n$} (d-1-5)
                   edge node[right]{$\varphi_n$} (d-2-4)
      (d-1-5) edge node[right]{$\Sigma\varphi_{1}$} (d-2-5)
      (d-2-1) edge node[above]{$\beta_1$} (d-2-2)
      (d-2-2) edge node[above]{$\beta_2$} (d-2-3)
      (d-2-3) edge node[above]{$\beta_{n - 1}$} (d-2-4)
      (d-2-4) edge node[above]{$\beta_n$} (d-2-5);
  \end{tikzpicture}
\end{center}
commutes.  It is an \emph{isomorphism} if
$\varphi_1,\varphi_2,\ldots,\varphi_n$ are all isomorphisms in $\C$,
and a \emph{weak isomorphism} if $\varphi_i$ and $\varphi_{i+1}$ are
isomorphisms for some $i$ (with $\varphi_{n+1} \ceq \Sigma
\varphi_1$).

The category $\C$ is \emph{pre-$n$-angulated} if there exists a
collection $\nang$ of $n$-$\Sigma$-sequences satisfying the following
three axioms:
\begin{itemize}
\item[{\textbf{(N1)}}]
  \begin{itemize}
  \item[(a)] $\nang$ is closed under direct sums, direct summands and
    isomorphisms of $n$-$\Sigma$-sequences.
  \item[(b)] For all $A \in \C$, the trivial $n$-$\Sigma$-sequence
   \begin{equation*}
     A \xrightarrow{1} A \to 0 \to \cdots \to 0 \to \Sigma A
   \end{equation*}
   belongs to $\nang$.
 \item[(c)] For each morphism $\alpha \colon A_1 \to A_2$ in $\C$,
   there exists an $n$-$\Sigma$-sequence in $\nang$ whose first
   morphism is $\alpha$.
 \end{itemize}
\item[{\textbf{(N2)}}] An $n$-$\Sigma$-sequence belongs to $\nang$ if
  and only if its left rotation belongs to $\nang$.
\item[{\textbf{(N3)}}] Each commutative diagram
  \begin{center}
    \begin{tikzpicture}
      \diagram{d}{2.5em}{2.5em}{
        A_1 & A_2 & A_3 & \cdots & A_n & \Sigma A_1\\
        B_1 & B_2 & B_3 & \cdots & B_n & \Sigma B_1\\
      };

      \path[->,midway,font=\scriptsize]
        (d-1-1) edge node[above]{$\alpha_1$} (d-1-2)
                     edge node[right]{$\varphi_1$} (d-2-1)
        (d-1-2) edge node[above]{$\alpha_2$} (d-1-3)
                     edge node[right]{$\varphi_2$} (d-2-2)
        (d-1-3) edge node[above]{$\alpha_3$} (d-1-4)
                     edge[densely dashed] node[right]{$\varphi_3$} (d-2-3)
        (d-1-4) edge node[above]{$\alpha_{n - 1}$} (d-1-5)
        (d-1-5) edge node[above]{$\alpha_n$} (d-1-6)
                     edge[densely dashed] node[right]{$\varphi_n$} (d-2-5)
        (d-1-6) edge node[right]{$\Sigma\varphi_{1}$} (d-2-6)
        (d-2-1) edge node[above]{$\beta_1$} (d-2-2)
        (d-2-2) edge node[above]{$\beta_2$} (d-2-3)
        (d-2-3) edge node[above]{$\beta_3$} (d-2-4)
        (d-2-4) edge node[above]{$\beta_{n - 1}$} (d-2-5)
        (d-2-5) edge node[above]{$\beta_n$} (d-2-6);
    \end{tikzpicture}
  \end{center}
  with rows in $\nang$ can be completed to a morphism of
  $n$-$\Sigma$-sequences.
\end{itemize}

The collection $\nang$ satisfying the above three axioms is a
\emph{pre-$n$-angulation} of the category $\C$ (relative to the
automorphism $\Sigma$), and the $n$-$\Sigma$-sequences in $\nang$ are
called \emph{$n$-angles}.  If, in addition, the collection $\nang$
satisfies the following axiom, then it is an \emph{$n$-angulation} of
$\C$, and the category is \emph{$n$-angulated}:

\begin{itemize}
\item[{\textbf{(N4)}}] In the situation of (N3), the morphisms
  $\varphi_3,\varphi_4,\ldots,\varphi_n$ can be chosen such that the
  mapping cone
  \begin{equation*}
    A_2 \oplus B_1 \xrightarrow{\left[
        \begin{smallmatrix}
          -\alpha_2 & 0\\
          \hfill \varphi_2 & \beta_1
        \end{smallmatrix}
      \right]} A_3 \oplus B_2 \xrightarrow{\left[
        \begin{smallmatrix}
          -\alpha_3 & 0\\
          \hfill \varphi_3 & \beta_2
        \end{smallmatrix}
      \right]} \cdots \xrightarrow{\left[
        \begin{smallmatrix} 
          -\alpha_n & 0\\
          \hfill \varphi_n & \beta_{n - 1}
        \end{smallmatrix}
      \right]} \Sigma A_1 \oplus B_n \xrightarrow{\left[
       \begin{smallmatrix}
          -\Sigma \alpha_1 & 0\\
          \hfill \Sigma \varphi_1 & \beta_n
        \end{smallmatrix}
      \right]} \Sigma A_2 \oplus \Sigma B_1
  \end{equation*}
belongs to $\nang$.
\end{itemize}

\sloppy To be precise, one should include all the data when referring
to a (pre-)$n$-angulated category, and therefore write $(\C, \Sigma,
\nang)$. Note that by \cite[Proposition 1.5(c)]{GKO}, if $(\C, \Sigma,
\nang_1)$ and $(\C, \Sigma, \nang_2)$ are pre-$n$-angulated categories
(with the same underlying category $\C$ and automorphism $\Sigma$)
with $\nang_1 \subseteq \nang_2$, then the pre-$n$-angulations must
actually coincide, i.e.\ $\nang_1 = \nang_2$.

\begin{definition}
  \label{def:homotopy}
  Let $A_\bullet$ and $B_\bullet$ be two $n$-$\Sigma$-sequences and
  $\varphi$ and $\psi$ two morphisms from $A_\bullet$ to $B_\bullet$.
  A \emph{homotopy} $\Theta$ from $\varphi$ to $\psi$ is given by
  diagonal morphisms $\Theta_i$
  \begin{center}
    \begin{tikzpicture}
      \diagram{d}{3.5em}{4em}{
        A_1 & A_2 & A_3 & \cdots & A_n & \Sigma A_1\\
        B_1 & B_2 & B_3 & \cdots & B_n & \Sigma B_1\\
      };

      \path[->,midway,font=\scriptsize]
        (d-1-1) edge node[above]{$\alpha_1$} (d-1-2)
        ([xshift=-1mm] d-1-1.south) edge node[left]{$\varphi_1$}
        ([xshift=-1mm] d-2-1.north)
        ([xshift=1mm] d-1-1.south) edge node[right]{$\psi_1$}
        ([xshift=1mm] d-2-1.north)
        (d-1-2) edge node[above]{$\alpha_2$} (d-1-3)
                     edge node[above left]{$\Theta_1$} (d-2-1)
        ([xshift=-1mm] d-1-2.south) edge node[left]{$\varphi_2$}
        ([xshift=-1mm] d-2-2.north)
        ([xshift=1mm] d-1-2.south) edge node[right]{$\psi_2$}
        ([xshift=1mm] d-2-2.north)
        (d-1-3) edge node[above]{$\alpha_3$} (d-1-4)
                     edge node[above left]{$\Theta_2$} (d-2-2)
        ([xshift=-1mm] d-1-3.south) edge node[left]{$\varphi_3$}
        ([xshift=-1mm] d-2-3.north)
        ([xshift=1mm] d-1-3.south) edge node[right]{$\psi_3$}
        ([xshift=1mm] d-2-3.north)
        (d-1-4) edge node[above]{$\alpha_{n - 1}$} (d-1-5)
        (d-1-5) edge node[above]{$\alpha_n$} (d-1-6)
        ([xshift=-1mm] d-1-5.south) edge node[left]{$\varphi_n$}
        ([xshift=-1mm] d-2-5.north)
        ([xshift=1mm] d-1-5.south) edge node[right]{$\psi_n$}
        ([xshift=1mm] d-2-5.north)
        (d-1-6) edge node[above left]{$\Theta_n$} (d-2-5)
        ([xshift=-1mm] d-1-6.south) edge node[left]{$\Sigma\varphi_1$}
        ([xshift=-1mm] d-2-6.north)
        ([xshift=1mm] d-1-6.south) edge node[right]{$\Sigma\psi_1$}
        ([xshift=1mm] d-2-6.north)
        (d-2-1) edge node[above]{$\beta_1$} (d-2-2)
        (d-2-2) edge node[above]{$\beta_2$} (d-2-3)
        (d-2-3) edge node[above]{$\beta_3$} (d-2-4)
        (d-2-4) edge node[above]{$\beta_{n - 1}$} (d-2-5)
        (d-2-5) edge node[above]{$\beta_n$} (d-2-6);
    \end{tikzpicture}
  \end{center}
  such that
  \begin{equation*}
    \varphi_i - \psi_i = \Theta_i \circ \alpha_i + \beta_{i - 1} \circ
    \Theta_{i - 1} \quad \text{for} \quad i = 2,3,\ldots,n,
  \end{equation*}
  and
  \begin{equation*}
    \Sigma \varphi_1 - \Sigma \psi_1 = \Sigma\Theta_1 \circ \Sigma
    \alpha_1 + \beta_n \circ \Theta_n.
  \end{equation*}
  In this case, we say that $\varphi$ and $\psi$ are
  \emph{homotopic}. A morphism homotopic to the zero morphism is
  called \emph{nullhomotopic}; an $n$-$\Sigma$-sequence is
  \emph{contractible} if its identity morphism is nullhomotopic.
\end{definition}

\begin{lemma}\label{lem:homotopic}
  \emph{(1)} Any morphism to or from a contractible
  $n$-$\Sigma$-sequence is nullhomotopic.

  \emph{(2)} Homotopic morphisms of $n$-$\Sigma$-sequences have
  isomorphic mapping cones.
\end{lemma}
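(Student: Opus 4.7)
For part (1), the approach is to transport the contraction of the contractible sequence along the given morphism. Suppose $A_\bullet$ is contractible, so that the identity $(\mathrm{id}_{A_1},\ldots,\mathrm{id}_{A_n})$ admits a nullhomotopy $(\Theta_1,\ldots,\Theta_n)$ with $\Theta_i\colon A_{i+1}\to A_i$ (reading $A_{n+1}\ceq \Sigma A_1$). Given any morphism $\varphi\colon A_\bullet\to B_\bullet$, I propose $\Theta_i'\ceq \varphi_i\circ\Theta_i$ as the nullhomotopy of $\varphi$. Checking the two homotopy identities then reduces to inserting $\varphi_i$ into the contraction identity for $\mathrm{id}_{A_\bullet}$, using the commutativity squares $\beta_{i-1}\circ\varphi_{i-1}=\varphi_i\circ\alpha_{i-1}$ and its $\Sigma$-shift to move $\varphi$ across the lower edge; factoring out $\varphi_i$ and applying $\varphi_i\circ\mathrm{id}_{A_i}=\varphi_i$ closes the computation. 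The symmetric case, where $B_\bullet$ is contractible via some $\Omega$, is handled dually by taking $\Theta_i'\ceq \Omega_i\circ\varphi_{i+1}$ and using the squares to factor $\varphi_i$ out on the right.

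For part (2), given homotopic morphisms $\varphi,\psi\colon A_\bullet\to B_\bullet$ with homotopy $(\Theta_1,\ldots,\Theta_n)$, I plan to write down an explicit isomorphism between the two mapping cones by means of the unitriangular matrices
\[
  F_i\ceq \begin{pmatrix} 1 & 0 \\ \Theta_i & 1 \end{pmatrix}\colon A_{i+1}\oplus B_i \longrightarrow A_{i+1}\oplus B_i
\]
at positions $i=1,\ldots,n$, together with $\Sigma F_1$ in the terminal position $\Sigma A_2\oplus\Sigma B_1$. Each $F_i$ is automatically an isomorphism, with inverse obtained by negating $\Theta_i$, so the only task is to verify that the squares
\[
  F_{i+1}\circ\begin{pmatrix} -\alpha_{i+1} & 0 \\ \varphi_{i+1} & \beta_i \end{pmatrix} = \begin{pmatrix} -\alpha_{i+1} & 0 \\ \psi_{i+1} & \beta_i \end{pmatrix}\circ F_i
\]
commute. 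A direct $2\times 2$ multiplication shows that this commutativity is equivalent, entry by entry, to the homotopy identity $\varphi_{i+1}-\psi_{i+1}=\Theta_{i+1}\circ\alpha_{i+1}+\beta_i\circ\Theta_i$; the signs $-\alpha_i$ in the mapping cone differentials are arranged exactly so that the sign of $\Theta_{i+1}\circ\alpha_{i+1}$ produced by the multiplication matches that in the homotopy identity.

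The only step deserving a moment of care is the wrap-around square at position $n$, whose right-hand vertical map is $\Sigma F_1$ rather than a freshly chosen matrix; here the correct verification consumes the second homotopy identity $\Sigma\varphi_1-\Sigma\psi_1=\Sigma\Theta_1\circ\Sigma\alpha_1+\beta_n\circ\Theta_n$, which is precisely the one that the definition of homotopy supplies for this purpose. I expect no genuine obstacle beyond this bookkeeping: once the matrix form of $F_i$ is fixed and the signs in the mapping cone are tracked, the proof is a mechanical calculation whose sole inputs are the two defining relations of the homotopy $\Theta$.
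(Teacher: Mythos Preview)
Your proposal is correct and essentially matches the paper's proof. For part (1) the paper compresses your explicit computation into the single observation that pre- and post-composition preserve the homotopy relation (so $\varphi=\varphi\circ\mathrm{id}_{A_\bullet}\simeq\varphi\circ 0=0$), and for part (2) the paper uses exactly the same unitriangular matrices $\left[\begin{smallmatrix}1&0\\\Theta_i&1\end{smallmatrix}\right]$ that you propose, displaying their inverses $\left[\begin{smallmatrix}1&0\\-\Theta_i&1\end{smallmatrix}\right]$ alongside.
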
  

\begin{proof}
  (1) This is standard: if $A_\bullet \xrightarrow{f} B_\bullet,
  B_\bullet \xrightarrow{\varphi} C_\bullet, B_\bullet
  \xrightarrow{\psi} C_\bullet, C_\bullet \xrightarrow{g} D_\bullet$
  are morphisms of $n$-$\Sigma$-sequences with $\varphi$ and $\psi$
  homotopic, then $g \circ \varphi \circ f$ and $g \circ \psi \circ f$
  are homotopic.

  (2) We use the notation from the definition above. The diagram
  \begin{center}
    \begin{tikzpicture}
      \diagram{d}{3em}{4.75em}{ 
        A_2 \oplus B_1 & A_3 \oplus B_2 & \cdots & \Sigma A_1 \oplus
        B_n & \Sigma A_2 \oplus \Sigma B_1\\
        A_2 \oplus B_1 & A_3 \oplus B_2 & \cdots & \Sigma A_1 \oplus
        B_n & \Sigma A_2 \oplus \Sigma B_1\\
        A_2 \oplus B_1 & A_3 \oplus B_2 & \cdots & \Sigma A_1 \oplus
        B_n & \Sigma A_2 \oplus \Sigma B_1\\
      };

      \path[->,midway,font=\scriptsize]
        (d-1-1) edge node[above]{$\left[
            \begin{smallmatrix}
              -\alpha_2 & 0\\
              \hfill \varphi_2 & \beta_1
            \end{smallmatrix}
          \right]$} (d-1-2)
                edge node[right]{$\left[
                    \begin{smallmatrix}
                      1 & 0\\
                      \Theta_1 & 1
                    \end{smallmatrix}
                  \right]$} (d-2-1)
        (d-1-2) edge node[above]{$\left[
            \begin{smallmatrix}
              -\alpha_3 & 0\\
              \hfill \varphi_3 & \beta_2
            \end{smallmatrix}
          \right]$} (d-1-3)
                edge node[right]{$\left[
                    \begin{smallmatrix}
                      1 & 0\\
                      \Theta_2 & 1
                    \end{smallmatrix}
                  \right]$} (d-2-2)
        (d-1-3) edge node[above]{$\left[
            \begin{smallmatrix}
              -\alpha_n & 0\\
              \hfill \varphi_n & \beta_{n - 1}
            \end{smallmatrix}
          \right]$} (d-1-4)
        (d-1-4) edge node[above]{$\left[
            \begin{smallmatrix}
              -\Sigma\alpha_1 & 0\\
              \hfill \Sigma\varphi_1 & \beta_n
            \end{smallmatrix}
          \right]$} (d-1-5)
                edge node[right]{$\left[
                    \begin{smallmatrix}
                      1 & 0\\
                      \Theta_n & 1
                    \end{smallmatrix}
                  \right]$} (d-2-4)
        (d-1-5) edge node[right]{$\left[
            \begin{smallmatrix}
              1 & 0\\
              \Sigma\Theta_1 & 0
            \end{smallmatrix}
          \right]$} (d-2-5)
        (d-2-1) edge node[above]{$\left[
            \begin{smallmatrix}
              -\alpha_2 & 0\\
              \hfill \psi_2 & \beta_1
            \end{smallmatrix}
          \right]$} (d-2-2)
                edge node[right]{$\left[
                    \begin{smallmatrix}
                      1 & 0\\
                      -\Theta_1 & 1
                    \end{smallmatrix}
                  \right]$} (d-3-1)
        (d-2-2) edge node[above]{$\left[
            \begin{smallmatrix}
              -\alpha_3 & 0\\
              \hfill \psi_3 & \beta_2
            \end{smallmatrix}
          \right]$} (d-2-3)
                edge node[right]{$\left[
                    \begin{smallmatrix}
                      1 & 0\\
                      -\Theta_2 & 1
                    \end{smallmatrix}
                  \right]$} (d-3-2)
        (d-2-3) edge node[above]{$\left[
            \begin{smallmatrix}
              -\alpha_n & 0\\
              \hfill \psi_n & \beta_{n - 1}
            \end{smallmatrix}
          \right]$} (d-2-4)
        (d-2-4) edge node[above]{$\left[
            \begin{smallmatrix}
              -\Sigma\alpha_1 & 0\\
              \hfill \Sigma\psi_1 & \beta_n
            \end{smallmatrix}
          \right]$} (d-2-5)
                edge node[right]{$\left[
                    \begin{smallmatrix}
                      1 & 0\\
                      -\Theta_n & 1
                    \end{smallmatrix}
                  \right]$} (d-3-4)
        (d-2-5) edge node[right]{$\left[
            \begin{smallmatrix}
              1 & 0\\
              -\Sigma\Theta_1 & 0
            \end{smallmatrix}
          \right]$} (d-3-5)
        (d-3-1) edge node[above]{$\left[
            \begin{smallmatrix}
              -\alpha_2 & 0\\
              \hfill \varphi_2 & \beta_1
            \end{smallmatrix}
          \right]$} (d-3-2)
        (d-3-2) edge node[above]{$\left[
            \begin{smallmatrix}
              -\alpha_3 & 0\\
              \hfill \varphi_3 & \beta_2
            \end{smallmatrix}
          \right]$} (d-3-3)
        (d-3-3) edge node[above]{$\left[
            \begin{smallmatrix}
              -\alpha_n & 0\\
              \hfill \varphi_n & \beta_{n - 1}
            \end{smallmatrix}
          \right]$} (d-3-4)
        (d-3-4) edge node[above]{$\left[
            \begin{smallmatrix}
              -\Sigma\alpha_1 & 0\\
              \hfill \Sigma\varphi_1 & \beta_n
            \end{smallmatrix}
          \right]$} (d-3-5);
    \end{tikzpicture}
  \end{center}
  displays inverse isomorphisms between the two mapping cones of
  $\varphi$ and $\psi$.
\end{proof}

By \cite[Proposition 1.5(a)]{GKO}, any $n$-angle 
\begin{equation*}
  A_1 \xrightarrow{\alpha_1} A_2 \xrightarrow{\alpha_2} \cdots
  \xrightarrow{\alpha_{n - 1}} A_n \xrightarrow{\alpha_n} \Sigma A_1 
\end{equation*}
in a pre-$n$-angulated category $\C$ is necessarily exact, so that
when applying $\Hom_{\C}(B,-)$ for any object $B$, the result is a
long exact sequence of abelian groups. Consequently, the compositions
\begin{equation*}
  \alpha_2 \circ \alpha_1, \hspace{2mm} \alpha_3 \circ \alpha_2,\dots,
  \alpha_n \circ \alpha_{n - 1}, \hspace{2mm} \Sigma \alpha_1 \circ
  \alpha_n
\end{equation*}
are all zero morphisms in $\C$. We call an $n$-$\Sigma$-sequence with
this last property a \emph{candidate $n$-angle}. Thus, exact
$n$-$\Sigma$-sequences are candidate $n$-angles. The converse is of
course not true in general, but the following result shows that it
holds for contractible candidate $n$-angles. Moreover, the result
shows that contractible candidate $n$-angles and mapping cones of
isomorphisms automatically belong to \emph{any} pre-$n$-angulation of
the category. For triangulated categories, these results were proved
in \cite[Lemma 1.3.7 and Proposition 1.3.8]{N} and \cite[pp.\
231--232]{N1}.

\begin{lemma}
  \label{lem:contractible}
  If $(\C, \Sigma, \nang)$ is a pre-$n$-angulated category and
  $A_\bullet$ a candidate $n$-angle in $\C$, then the following hold.
  
  (1) If $A_\bullet$ is contractible, then it is exact and belongs to $\nang$.
  
  (2) If $\varphi \colon A_\bullet \to A_\bullet$ is an isomorphism of
  $n$-$\Sigma$-sequences, then its mapping cone is contractible (and
  therefore belongs to $\nang$ by (1)).
\end{lemma}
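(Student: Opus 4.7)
The plan separates into the two parts of the lemma.

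For part (1), the first task is exactness, which drops out of the homotopy identities: given $B \in \C$ and $f \colon B \to A_i$ with $\alpha_i \circ f = 0$, the identity $\id_{A_i} = \Theta_i \circ \alpha_i + \alpha_{i - 1} \circ \Theta_{i - 1}$ gives $f = \alpha_{i - 1} \circ (\Theta_{i - 1} \circ f)$, so $f$ lies in the image of $(\alpha_{i - 1})_*$; the positions $A_1$ and $\Sigma A_1$ are handled by $\Sigma^{\pm 1}$-shifting the corresponding homotopy identity. The second task is to show $A_\bullet \in \nang$. The key calculation is that $e_i \ceq \Theta_i \circ \alpha_i$ is an idempotent on $A_i$: composing the homotopy identity at $A_i$ on the left with $\alpha_i$ and using the candidate $n$-angle relation $\alpha_i \circ \alpha_{i - 1} = 0$ one obtains $\alpha_i \circ \Theta_i \circ \alpha_i = \alpha_i$, whence $e_i^2 = e_i$.

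Assuming $\C$ is idempotent complete (as in the settings of interest) one gets splittings $A_i \cong X_i \oplus Y_i$ with $X_i = \Im e_i$ and $Y_i = \Im(\id_{A_i} - e_i)$. A direct block-matrix analysis then shows that each $\alpha_i$ vanishes on $Y_i$ and restricts to an isomorphism $X_i \to Y_{i + 1}$, the latter using both the candidate relations and the homotopy identities. Consequently, $A_\bullet$ decomposes as a direct sum of $n$ rotated trivial $n$-$\Sigma$-sequences, all of which lie in $\nang$ by (N1)(b) and (N2); closure under direct sums in (N1)(a) completes the argument. The main obstacle is precisely this block-structure computation: one must verify both that $\alpha_i$ vanishes on $Y_i$ and that the induced map $X_i \to Y_{i + 1}$ is invertible, which requires carefully combining the homotopy identities with the candidate $n$-angle relations.

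For part (2), the plan is entirely explicit: set
\begin{equation*}
  \Xi_i = \left[ \begin{smallmatrix} 0 & \varphi_{i + 1}^{-1} \\ 0 & 0 \end{smallmatrix} \right] \colon A_{i + 2} \oplus A_{i + 1} \to A_{i + 1} \oplus A_i,
\end{equation*}
and verify that $(\Xi_i)$ is a contracting homotopy for the mapping cone of $\varphi$. A short matrix computation reduces this verification to the compatibility $\varphi_{i + 1} \circ \alpha_i = \alpha_i \circ \varphi_i$ expressing that $\varphi$ is a morphism of $n$-$\Sigma$-sequences. Thus the mapping cone is contractible, and by part (1) it belongs to $\nang$.
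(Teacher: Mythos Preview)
Your argument for exactness in part~(1) and all of part~(2) is correct and essentially matches the paper; the explicit contracting homotopy you propose for the mapping cone is exactly what the paper writes down.

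The gap is in the second half of part~(1). You explicitly assume that $\C$ is idempotent complete, but this is \emph{not} among the hypotheses of the lemma, which is stated for an arbitrary pre-$n$-angulated category. The paper itself points out (in the remark immediately following the lemma) that without splitting of idempotents a contractible candidate $n$-angle need not decompose as a direct sum of trivial ones, so your decomposition argument simply does not go through in the stated generality. Saying ``as in the settings of interest'' is not enough: the lemma is invoked elsewhere as a general fact about pre-$n$-angulations, not just for the specific module category treated later.

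The paper's proof avoids idempotents altogether. One uses (N1)(c) to choose some $n$-angle $B_\bullet$ whose first morphism is $\alpha_1$, and then, using the contracting homotopy $(\Theta_i)$ together with the exactness of $B_\bullet$, one manufactures a morphism $(1,1,\varphi_3,\dots,\varphi_n)\colon A_\bullet\to B_\bullet$. Concretely $\varphi_3\ceq\beta_2\circ\Theta_2$, while $\varphi_n$ is obtained by lifting $\alpha_n\circ\Theta_n$ along $\beta_n$ using exactness; for $n\geq 5$ the remaining $\varphi_i$ can be taken to be zero. This is a weak isomorphism (the first two components are identities), and since $A_\bullet$ is already known to be exact, \cite[Lemma~1.4]{GKO} forces $A_\bullet\in\nang$. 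This argument uses nothing beyond the pre-$n$-angulation axioms and the exactness of $n$-angles, so it proves the lemma as stated.
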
 

\begin{proof}
  (1) Since $A_\bullet$ is contractible, its identity morphism is
  homotopic to the zero morphism. Thus there exist diagonal morphisms
  \begin{center}
    \begin{tikzpicture}
      \diagram{d}{3em}{4em}{
        A_1 & A_2 & A_3 & \cdots & A_n & \Sigma A_1\\
        A_1 & A_2 & A_3 & \cdots & A_n & \Sigma A_1\\
      };

      \path[->,midway,font=\scriptsize]
        (d-1-1) edge node[above]{$\alpha_1$} (d-1-2)
                     edge node[left]{$1$} (d-2-1)
        (d-1-2) edge node[above]{$\alpha_2$} (d-1-3)
                     edge node[above left]{$\Theta_1$} (d-2-1)
                     edge node[left]{$1$} (d-2-2.north)
        (d-1-3) edge node[above]{$\alpha_3$} (d-1-4)
                     edge node[above left]{$\Theta_2$} (d-2-2)
                     edge node[left]{$1$} (d-2-3)
        (d-1-4) edge node[above]{$\alpha_{n - 1}$} (d-1-5)
        (d-1-5) edge node[above]{$\alpha_n$} (d-1-6)
                     edge node[left]{$1$} (d-2-5)
        (d-1-6) edge node[above left]{$\Theta_n$} (d-2-5)
                     edge node[left]{$1$} (d-2-6)
        (d-2-1) edge node[above]{$\alpha_1$} (d-2-2)
        (d-2-2) edge node[above]{$\alpha_2$} (d-2-3)
        (d-2-3) edge node[above]{$\alpha_3$} (d-2-4)
        (d-2-4) edge node[above]{$\alpha_{n - 1}$} (d-2-5)
        (d-2-5) edge node[above]{$\alpha_n$} (d-2-6);
    \end{tikzpicture}
  \end{center}
  in $\C$ satisfying
  \begin{equation*}
    1_{A_i} = \Theta_i \circ \alpha_i + \alpha_{i - 1} \circ
    \Theta_{i - 1} \quad \text{for} \quad i = 2,3,\ldots,n,
  \end{equation*}
  and
  \begin{equation*}
    1_{\Sigma A_1} = \Sigma\Theta_1 \circ \Sigma
    \alpha_1 + \alpha_n \circ \Theta_n.
  \end{equation*}
  If $B$ is any object in $\C$, then applying $\Hom_{\C}(B,-)$ to
  $A_\bullet$ gives a complex $\Hom_{\C}(B, A_\bullet )$ of abelian
  groups, since $A_\bullet$ is a candidate $n$-angle. Moreover, this
  complex is contractible, as is seen directly by applying
  $\Hom_{\C}(B,-)$ to the contracting homotopy above. The complex
  $\Hom_{\C}(B, A_\bullet )$ is therefore exact (every contractible
  complex of abelian groups is exact), hence $A_\bullet$ is exact.

  Next, we show that $A_\bullet$ is an $n$-angle, i.e.\ that
  $A_\bullet \in \nang$. For $n=3$, that is, when $\C$ is a
  pre-triangulated category, this is just \cite[Proposition
  1.3.8]{N}. Therefore we may assume that $n \geq 4$.

  By axiom (N1)(c), there exists an $n$-angle
  \begin{equation*}
    B_\bullet\colon \quad A_1 \xrightarrow{\alpha_1} A_2
    \xrightarrow{\beta_2} B_3 \xrightarrow{\beta_3} \cdots
    \xrightarrow{\beta_{n - 1}} B_n \xrightarrow{\beta_n} \Sigma A_1 
  \end{equation*}
  whose first morphism is $\alpha_1$. Using the contracting homotopy
  above, we shall complete the solid part of the diagram
  \begin{center}
    \begin{tikzpicture}
      \diagram{d}{2.5em}{2.5em}{ 
        A_1 & A_2 & A_3 & \cdots & A_n & \Sigma A_1\\
        A_1 & A_2 & B_3 & \cdots & B_n & \Sigma A_1\\
      };

      \path[->,midway,font=\scriptsize]
        (d-1-1) edge node[above]{$\alpha_1$} (d-1-2)
                     edge node[right]{$1$} (d-2-1)
        (d-1-2) edge node[above]{$\alpha_2$} (d-1-3)
                     edge node[right]{$1$} (d-2-2)
        (d-1-3) edge node[above]{$\alpha_3$} (d-1-4)
                     edge[densely dashed] node[right]{$\varphi_3$} (d-2-3)
        (d-1-4) edge node[above]{$\alpha_{n - 1}$} (d-1-5)
        (d-1-5) edge node[above]{$\alpha_n$} (d-1-6)
                     edge[densely dashed] node[right]{$\varphi_n$} (d-2-5)
        (d-1-6) edge node[right]{$1$} (d-2-6.north)
        (d-2-1) edge node[above]{$\alpha_1$} (d-2-2)
        (d-2-2) edge node[above]{$\beta_2$} (d-2-3)
        (d-2-3) edge node[above]{$\beta_3$} (d-2-4)
        (d-2-4) edge node[above]{$\beta_{n - 1}$} (d-2-5)
        (d-2-5) edge node[above]{$\beta_n$} (d-2-6);
    \end{tikzpicture}
  \end{center}
  to a morphism $(1,1, \varphi_3, \dots, \varphi_n)$ of
  $n$-$\Sigma$-sequences, a weak isomorphism. Then since $B_\bullet$
  is an $n$-angle and $A_\bullet$ is exact, the latter must also be an
  $n$-angle by \cite[Lemma 1.4]{GKO}.

  We start with the two morphisms $\varphi_3$ and $\varphi_n$. Define
  $\varphi_3$ by
  \begin{equation*}
    \varphi_3 \ceq \beta_2 \circ \Theta_2.
  \end{equation*}
  Then 
  \begin{equation*}
    \varphi_3 \circ \alpha_2 = \beta_2 \circ \Theta_2 \circ \alpha_2 =
    \beta_2 \circ ( 1_{A_2} - \alpha_1 \circ \Theta_1) = \beta_2
  \end{equation*}
  since $\beta_2 \circ \alpha_1 =0$, hence the second square from the
  left commutes. To define $\varphi_n$, we use the exactness of
  \begin{equation*}
    \Hom_{\C}( \Sigma A_1, B_n) \xrightarrow{(\beta_n)_*} \Hom_{\C}(
    \Sigma A_1, \Sigma A_1) \xrightarrow{(\Sigma \alpha_1)_*}
    \Hom_{\C}( \Sigma A_1, \Sigma A_2)
  \end{equation*}
  which follows from \cite[Proposition 1.5(a)]{GKO} and the fact that
  $B_\bullet$ is an $n$-angle. Consider the morphism $\alpha_n \circ
  \Theta_n$ in $\Hom_{\C}( \Sigma A_1, \Sigma A_1)$. When applying
  $(\Sigma \alpha_1)_*$ to this morphism, the result is zero since
  $\Sigma \alpha_1 \circ \alpha_n =0$. Therefore, by the exactness of
  the above sequence, there exists a morphism $\Theta \in \Hom_{\C}(
  \Sigma A_1, B_n)$ with $\beta_n \circ \Theta = \alpha_n \circ
  \Theta_n$. Now define $\varphi_n$ by
  \begin{equation*}
    \varphi_n \ceq \Theta \circ \alpha_n.
  \end{equation*}
  Then
  \begin{equation*}
    \beta_n \circ \varphi_n = \beta_n \circ \Theta \circ \alpha_n =
    \alpha_n \circ \Theta_n \circ \alpha_n = (1_{\Sigma A_1} - \Sigma
    \Theta_1 \circ \Sigma \alpha_1 ) \circ \alpha_n = \alpha_n
  \end{equation*}
  since $\Sigma \alpha_1 \circ \alpha_n$, hence the rightmost square
  commutes.

  Since $\beta_3 \circ \beta_2 =0$, it follows from the definition of
  $\varphi_3$ that $\beta_3 \circ \varphi_3 =0$. Similarly, since
  $\alpha_n \circ \alpha_{n - 1} =0$, it follows from the definition of
  $\varphi_n$ that $\varphi_n \circ \alpha_{n - 1} =0$. Therefore, if
  $n=4$, we have obtained a morphism $(1,1, \varphi_3, \varphi_4)$ of
  $4$-$\Sigma$-sequences. If $n \geq 5$, we let $\varphi_4, \dots,
  \varphi_{n - 1}$ all be zero morphisms: in this case $(1,1, \varphi_3,
  0, \dots, 0, \varphi_n)$ is a morphism of $n$-$\Sigma$-sequences.
  
  (2) The mapping cone of $\varphi$ is the $n$-$\Sigma$-sequence
  \begin{equation*}
    A_2 \oplus A_1 \xrightarrow{\left[
        \begin{smallmatrix}
          -\alpha_2 & 0\\
          \hfill \varphi_2 & \alpha_1
        \end{smallmatrix}
      \right]} A_3 \oplus A_2 \xrightarrow{\left[
        \begin{smallmatrix}
          -\alpha_3 & 0\\
          \hfill \varphi_3 & \alpha_2
        \end{smallmatrix}
      \right]} \cdots \xrightarrow{\left[
        \begin{smallmatrix} 
          -\alpha_n & 0\\
          \hfill \varphi_n & \alpha_{n - 1}
        \end{smallmatrix}
      \right]} \Sigma A_1 \oplus A_n \xrightarrow{\left[
       \begin{smallmatrix}
          -\Sigma \alpha_1 & 0\\
          \hfill \Sigma \varphi_1 & \alpha_n
        \end{smallmatrix}
      \right]} \Sigma A_2 \oplus \Sigma A_1
  \end{equation*}
  which is easily seen to be a candidate $n$-angle. The diagram
  \begin{center}
    \begin{tikzpicture}
      \diagram{d}{4em}{5em}{
        A_2 \oplus A_1 & A_3 \oplus A_2 & \cdots & \Sigma A_1 \oplus
        A_n & \Sigma A_2 \oplus \Sigma A_1\\
        A_2 \oplus A_1 & A_3 \oplus A_2 & \cdots & \Sigma A_1 \oplus
        A_n & \Sigma A_2 \oplus \Sigma A_1\\
      };

      \path[->,midway,font=\scriptsize]
        (d-1-1) edge node[above]{$\left[
            \begin{smallmatrix}
              - \alpha_2 & 0\\
              \hfill \varphi_2 & \alpha_1
            \end{smallmatrix}
          \right]$} (d-1-2)
                edge node[right]{$\left[
                    \begin{smallmatrix}
                      1 & 0\\
                      0 & 1
                    \end{smallmatrix}
                  \right]$} (d-2-1)
        (d-1-2) edge node[above]{$\left[
            \begin{smallmatrix}
              - \alpha_3 & 0\\
              \hfill \varphi_3 & \alpha_2
            \end{smallmatrix}
          \right]$} (d-1-3)
                edge node[right]{$\left[
                    \begin{smallmatrix}
                      1 & 0\\
                      0 & 1
                    \end{smallmatrix}
                  \right]$} (d-2-2)
        (d-1-2.south west) edge node[yshift=-3pt, above left]{$M_2$}
          (d-2-1.north east)
        (d-1-3) edge node[above]{$\left[
            \begin{smallmatrix}
              - \alpha_n & 0\\
              \hfill \varphi_n & \alpha_{n-1}
            \end{smallmatrix}
          \right]$} (d-1-4)
        (d-1-4) edge node[above]{$\left[
            \begin{smallmatrix}
              - \Sigma \alpha_1 & 0\\
              \hfill \Sigma \varphi_1 & \alpha_n
            \end{smallmatrix}
          \right]$} (d-1-5)
                edge node[right]{$\left[
                    \begin{smallmatrix}
                      1 & 0\\
                      0 & 1
                    \end{smallmatrix}
                  \right]$} (d-2-4)
        (d-1-5) edge node[right]{$\left[
            \begin{smallmatrix}
              1 & 0\\
              0 & 1
            \end{smallmatrix}
          \right]$} (d-2-5)
        (d-1-5.south west) edge node[yshift=-3pt,above left]{$M_1$}
          (d-2-4.north east)
        (d-2-1) edge node[xshift=2pt, above]{$\left[
            \begin{smallmatrix}
              - \alpha_2 & 0\\
              \hfill \varphi_2 & \alpha_1
            \end{smallmatrix}
          \right]$} (d-2-2)
        (d-2-2) edge node[above]{$\left[
            \begin{smallmatrix}
              - \alpha_3 & 0\\
              \hfill \varphi_3 & \alpha_2
            \end{smallmatrix}
          \right]$} (d-2-3)
        (d-2-3) edge node[above]{$\left[
            \begin{smallmatrix}
              - \alpha_n & 0\\
              \hfill \varphi_n & \alpha_{n-1}
            \end{smallmatrix}
          \right]$} (d-2-4)
        (d-2-4) edge node[xshift=4pt, above]{$\left[
            \begin{smallmatrix}
              - \Sigma \alpha_1 & 0\\
              \hfill \Sigma \varphi_1 & \alpha_n
            \end{smallmatrix}
          \right]$} (d-2-5);
    \end{tikzpicture}
  \end{center}
  with
  \begin{equation*}
    M_1 = \left[
      \begin{smallmatrix}
        0 & \Sigma \varphi_1^{-1} \\
        \hfill 0 & 0
      \end{smallmatrix}
    \right], \hspace{7mm} M_i = \left[
      \begin{smallmatrix}
        0 & \varphi_i^{-1} \\
        \hfill 0 & 0
      \end{smallmatrix}
    \right] \text{ for } 2 \le i \leq n
  \end{equation*}
  displays a contracting homotopy.          
\end{proof} 

The first part of the lemma shows that when one defines a collection
of $n$-angles in a category, in order to endow it with the structure
of a (pre-)$n$-angulated category, then one must include all the
contractible candidate $n$-angles.

\begin{remark}
  \label{rem:Neeman}
  It is easily seen that a direct sum of contractible
  $n$-$\Sigma$-sequences is again contractible, as is any trivial
  $n$-$\Sigma$-sequence. Consequently, a direct sum of trivial
  $n$-$\Sigma$-sequences is contractible. The converse holds if
  idempotents split in the category: in this case every contractible
  candidate $n$-angle is automatically a direct sum of trivial
  $n$-$\Sigma$-sequences. Namely, if
  \begin{center}
    \begin{tikzpicture}
      \diagram{d}{3em}{3.4em}{
        A_1 & A_2 & A_3 & \cdots & A_n & \Sigma A_1\\
        A_1 & A_2 & A_3 & \cdots & A_n & \Sigma A_1\\
      };

      \path[->,midway,font=\scriptsize]
        (d-1-1) edge node[above]{$\alpha_1$} (d-1-2)
                     edge node[left]{$1$} (d-2-1)
        (d-1-2) edge node[above]{$\alpha_2$} (d-1-3)
                     edge node[above left]{$\Theta_1$} (d-2-1)
                     edge node[left]{$1$} (d-2-2)
        (d-1-3) edge node[above]{$\alpha_3$} (d-1-4)
                     edge node[above left]{$\Theta_2$} (d-2-2)
                     edge node[left]{$1$} (d-2-3)
        (d-1-4) edge node[above]{$\alpha_{n - 1}$} (d-1-5)
        (d-1-5) edge node[above]{$\alpha_n$} (d-1-6)
                     edge node[left]{$1$} (d-2-5)
        (d-1-6) edge node[above left]{$\Theta_n$} (d-2-5)
                     edge node[left]{$1$} (d-2-6)
        (d-2-1) edge node[above]{$\alpha_1$} (d-2-2)
        (d-2-2) edge node[above]{$\alpha_2$} (d-2-3)
        (d-2-3) edge node[above]{$\alpha_3$} (d-2-4)
        (d-2-4) edge node[above]{$\alpha_{n - 1}$} (d-2-5)
        (d-2-5) edge node[above]{$\alpha_n$} (d-2-6);
    \end{tikzpicture}
  \end{center}
  is a contracting homotopy on a candidate $n$-angle $A_\bullet$, then
  the equalities
  \begin{equation*}
    \begin{array}{r@{\ }c@{\ }l}
      1_{A_2} & = & (\Theta_2 \circ \alpha_2) + ( \alpha_1 \circ
      \Theta_1)\\
      & \vdots &\\
      1_{A_n} & = & (\Theta_n \circ \alpha_n) + ( \alpha_{n - 1} \circ
      \Theta_{n - 1})\\
      1_{\Sigma A_1} & = & (\Sigma\Theta_1 \circ \Sigma \alpha_1) + (
      \alpha_n \circ \Theta_n)
    \end{array}
  \end{equation*}
  are idempotent decompositions of the identity morphisms on the
  objects. The result is a decomposition of the candidate $n$-angle
  into a sum of trivial $n$-$\Sigma$-sequences. However, if it is not
  the case that all the idempotents split in the category, then a
  contractible candidate $n$-angle need not be a direct sum of trivial
  $n$-$\Sigma$-sequences.
\end{remark}

\section{Higher $n$-angulations}
\label{sec:example}

In this section we prove our main result: for a commutative local ring
with principal maximal ideal squaring to zero, the category of
finitely generated free modules is $n$-angulated for every $n \geq
3$. The construction of the class of $n$-angles is based on that of a
triangulated category without models in \cite{MSS}, and for odd $n$ we
need the same restrictions on the ring as in that paper. However, for
even $n$ our construction is much more general.

We now fix some notation. 

\begin{notation}
  (1) Let $R$ be a commutative local ring with a principal nonzero
  maximal ideal $\m =(p)$ satisfying $\m^2 =0$.

  (2) Let $\C$ be the category of finitely generated free $R$-modules,
  and $\Sigma \colon \C \to \C$ the identity functor.

  (3) Given a free module $F \in \C$, denote by $F(p)_{\bullet}$ the
  $n$-$\Sigma$-sequence
  \begin{equation*}
    F \xrightarrow{p} F \xrightarrow{p} \cdots \xrightarrow{p} F
    \xrightarrow{p} \Sigma F
  \end{equation*}
  in which the maps are just multiplication with the element $p \in R$. 

  (4) Let $\nang$ be the collection of all $n$-$\Sigma$-sequences in
  $\C$ isomorphic to a direct sum of the form $C_{\bullet} \oplus
  F(p)_{\bullet}$, where $C_{\bullet}$ is a contractible candidate
  $n$-angle in $\C$ and $F$ is a free module in $\C$.
\end{notation}

Two standard examples of such a local ring are $\Z/(q^2)$ for a prime
number $q$, and $k[x]/(x+a)^2$ for a field $k$ and any element
$a \in k$. More generally, if $A$ is an integral domain of Krull
dimension one, and $q \in A$ is a prime element, then the ring
$R = A/(q^2)$ satisfies our assumptions.

We now collect some facts about our ring $R$ and its category $\C$ of
finitely generated free modules.

\begin{remark}
  \label{rem:facts}
  (1) Let $a$ be an element of $R$. If $a \notin \m$ then the element
  is a unit. If $a \in \m$ then $a = pb$ for some element $b \in
  R$. Now, either $b$ is a unit, or $b \in \m$, in which case $a=0$
  since $\m^2 =0$. Consequently, an element of $R$ is either zero, a
  unit or of the form $up$ for some unit $u$.

  (2) The ideal $\m$ is the only non-trivial ideal of
  $R$. Consequently, the ring has Krull dimension zero, and the
  finitely generated $R$-modules have finite length. Moreover, by
  applying the Baer Criterion it is easily seen that $R$ is
  selfinjective.

  (3) Suppose $F_1 \xrightarrow{\alpha} F_2$ is an $R$-homomorphism
  between finitely generated free $R$-modules $F_1$ and $F_2$ of ranks
  $t_1$ and $t_2$, say. Then $\alpha$ is given by a $t_2 \times t_1$
  matrix $(a_{ij})$ with entries in $R$. Performing appropriate row
  and column operations on this matrix gives a new matrix of the form
  \begin{equation*}
    \begin{bmatrix}
      pI_u & 0 & 0\\
      0 & I_v & 0\\
      0 & 0 & Z
    \end{bmatrix}
  \end{equation*}
  where $I_u$ and $I_v$ are $u \times u$ and $v \times v$ identity
  matrices, and $Z$ is a zero matrix of size $(t_2-u-v) \times
  (t_1-u-v)$. Consequently, the map $\alpha$ is isomorphic to the
  decomposition
  \begin{equation*}
    F \oplus G \oplus H_1 \xrightarrow{
      \left[
        \begin{smallmatrix}
          p & 0 & 0\\
          0 & 1 & 0\\
          0 & 0 & 0
        \end{smallmatrix}
      \right]}
    F \oplus G \oplus H_2
  \end{equation*}
  for some free $R$-modules $F,G,H_1,H_2$, with $F_1 \simeq F \oplus G
  \oplus H_1$ and $F_2 \simeq F \oplus G \oplus H_2$. More precisely,
  there exists a commutative diagram
  \begin{center}
    \begin{tikzpicture}
      \diagram{d}{2.5em}{3em}{
        F_1 & F_2\\
        F \oplus G \oplus H_1 & F \oplus G \oplus H_2\\
      };

      \path[->,midway,font=\scriptsize]
        (d-1-1) edge node[above]{$\alpha$} (d-1-2)
                     edge node[right]{$\varphi_1$} (d-2-1)
        (d-1-2) edge node[right]{$\varphi_2$} (d-2-2)
        (d-2-1) edge node[above]{$\left[
            \begin{smallmatrix}
              p & 0 & 0\\
              0 & 1 & 0\\
              0 & 0 & 0
            \end{smallmatrix}
          \right]$} (d-2-2);
    \end{tikzpicture}
  \end{center}
  in $\C$, in which the vertical maps $\varphi_1$ and $\varphi_2$ are
  isomorphisms.

  (4) If $F$ is a free $R$-module of rank $t$, say, then the
  $n$-$\Sigma$-sequence $F(p)_{\bullet}$ in $\C$ is just the direct
  sum of $t$ copies of the $n$-$\Sigma$-sequence
  \begin{equation*}
    R \xrightarrow{p} R \xrightarrow{p} \cdots
    \xrightarrow{p} R \xrightarrow{p} \Sigma R 
  \end{equation*}
  i.e.\ $R(p)_{\bullet}$.

  (5) The idempotents in the category $\C$ split, hence the
  contractible candidate $n$-angles in $\C$ are precisely
  the direct sums of trivial $n$-$\Sigma$-sequences (cf.\ Remark
  \ref{rem:Neeman}).

  (6) Contractible candidate $n$-angles in $\C$ are in
  particular exact sequences of free modules, and so are all the
  sequences of the form $F(p)_{\bullet}$. Hence every
  $n$-angle in $\nang$ is exact (and periodic) when viewed as
  a sequence of free modules.

  (7) The collection $\nang$ of $n$-$\Sigma$-sequences in $\C$ depends
  on the integer $n$, so one should perhaps be more precise and write
  $\nang(n)$. However, we will not do this. Also, since the functor
  $\Sigma \colon \C \to \C$ is just the identity functor on $\C$, it
  may seem a bit superfluous to display it in the
  $n$-$\Sigma$-sequences. We have chosen to do so because it makes the
  $n$-$\Sigma$-sequences look more natural.
\end{remark}

We shall now prove that the triple $(\C, \Sigma, \nang)$ is an
$n$-angulated category for every $n \geq 3$. For odd $n$, we need the
additional assumption that $2p=0$ in $R$, but not for even $n$. We
divide the proof into three steps: axiom (N1), axiom (N2) and axiom
(N3)/(N4).

\begin{proposition}[Axiom (N1)]
  \label{prop:N1}
  For every $n \geq 3$, the collection $\nang$ is closed under direct
  sums, direct summands and isomorphisms. Furthermore, it contains all
  the trivial $n$-$\Sigma$-sequences, and every morphism in $\C$ is
  the first morphism of an $n$-$\Sigma$-sequence in $\nang$.
\end{proposition}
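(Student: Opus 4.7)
The plan is to verify the four ingredients of (N1) in turn; three of them will follow quickly from the definition of $\nang$ together with the generalities recorded in Remark~\ref{rem:facts}, while closure under direct summands requires a Krull--Schmidt argument.

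Closure under isomorphism is built into the definition of $\nang$. For closure under direct sums, observe that if $A_\bullet \simeq C_\bullet \oplus F(p)_\bullet$ and $A'_\bullet \simeq C'_\bullet \oplus F'(p)_\bullet$, then
\[
A_\bullet \oplus A'_\bullet \simeq (C_\bullet \oplus C'_\bullet) \oplus (F \oplus F')(p)_\bullet,
\]
where $C_\bullet \oplus C'_\bullet$ is again a contractible candidate $n$-angle by Remark~\ref{rem:Neeman}. Each trivial $n$-$\Sigma$-sequence is itself a contractible candidate $n$-angle (the obvious shift supplies a contracting homotopy), so it coincides with $C_\bullet \oplus 0(p)_\bullet$ and hence belongs to $\nang$.

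The main obstacle is closure under direct summands. To handle it I invoke Krull--Schmidt for the category of $n$-$\Sigma$-sequences of finitely generated free $R$-modules: since $R$ is an artinian local ring, every such endomorphism ring has finite length over $R$, hence is artinian and semi-perfect, so Krull--Schmidt applies. A short computation---using that $R$ is commutative and $\m^2=0$---shows that each rotation of the trivial $n$-$\Sigma$-sequence on $R$, as well as $R(p)_\bullet$ itself, has a local endomorphism ring and is therefore indecomposable. By Remark~\ref{rem:facts}(4) and~(5), every object of $\nang$ decomposes as a direct sum of indecomposables of these two types. Krull--Schmidt then forces any direct summand of such an object to have the same form, and so again to lie in $\nang$.

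Finally, for the statement that every morphism is the first morphism of some $n$-$\Sigma$-sequence in $\nang$, let $\alpha \colon F_1 \to F_2$ be a morphism in $\C$. By Remark~\ref{rem:facts}(3), $\alpha$ is isomorphic (as a morphism) to a block-diagonal map
\[
F \oplus G \oplus H_1 \xrightarrow{\operatorname{diag}(p,\,1,\,0)} F \oplus G \oplus H_2.
\]
Closure under isomorphism and direct sums reduces the problem to exhibiting an $n$-$\Sigma$-sequence in $\nang$ starting with each of the three blocks. For $p \colon F \to F$ this is $F(p)_\bullet$ by definition; for $1 \colon G \to G$ it is the trivial $n$-$\Sigma$-sequence on $G$, already shown to lie in $\nang$; and for $0 \colon H_1 \to H_2$ one takes the direct sum of two suitably rotated trivial $n$-$\Sigma$-sequences, one supported on $H_1$ with first object $H_1$, the other supported on $H_2$ with second object $H_2$, so that the resulting sequence starts with the zero morphism $H_1 \to H_2$ and is, as a direct sum of trivials, contractible and hence in $\nang$. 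Taking the direct sum of the three sequences produces the required $n$-$\Sigma$-sequence in $\nang$ beginning with $\alpha$.
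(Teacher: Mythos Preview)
Your argument is correct, and for closure under isomorphisms, direct sums, trivial sequences, and the existence of an $n$-angle extending a given morphism it matches the paper essentially line for line.

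The one place where you genuinely diverge is closure under direct summands. The paper does not invoke Krull--Schmidt; instead it works directly with a given summand $A_\bullet$, first peels off trivial $n$-$\Sigma$-sequences so that every map $\alpha_i$ becomes minimal (i.e.\ $\Im\alpha_i\subseteq\m A_{i+1}$), and then applies $R/\m\otimes_R-$ to the periodic exact complex $A_\bullet$. The short exact sequence $0\to R/\m\xrightarrow{p}R\to R/\m\to 0$ forces $H_i(R/\m\otimes_R A_\bullet)\simeq H_{i+1}(R/\m\otimes_R A_\bullet)$ for all $i$, and since the maps are minimal these homology groups are just $R/\m\otimes_R A_i$; hence all the $A_i$ have the same rank and $A_\bullet\simeq F(p)_\bullet$. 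Your Krull--Schmidt route is cleaner and more conceptual: once you know that $R(p)_\bullet$ and each rotation of the trivial sequence on $R$ have local endomorphism rings (both are easy computations), the result is immediate. The only ingredient you leave implicit is that idempotents split in the category of $n$-$\Sigma$-sequences in $\C$, which is needed for Krull--Schmidt and follows at once from idempotent-completeness of $\C$ itself. The paper's argument, by contrast, is entirely self-contained and exhibits concretely \emph{why} the summand has the required shape, at the cost of being a little longer.
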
 

\begin{proof}
  The collection $\nang$ is by definition closed under isomorphisms,
  and it contains all the trivial $n$-$\Sigma$-sequences since these
  are contractible. The direct sum of two contractible
  $n$-$\Sigma$-sequences is again contractible, and if $F,G$ are two
  free modules, then the $n$-$\Sigma$-sequence $F(p)_{\bullet} \oplus
  G(p)_{\bullet}$ equals $(F \oplus G)(p)_{\bullet}$. Hence $\nang$ is
  closed under direct sums.

  Next we show that $\nang$ is closed under direct summands. Let
  \begin{equation*}
    A_{\bullet} \colon \quad A_1 \xrightarrow{\alpha_1} A_2
    \xrightarrow{\alpha_2} \cdots \xrightarrow{\alpha_{n - 1}} A_n
    \xrightarrow{\alpha_n} \Sigma A_1
  \end{equation*}
  be a nonzero direct summand of an $n$-$\Sigma$-sequence in
  $\nang$. If the map $\alpha_1$ is not minimal, i.e.\ if
  $\Im \alpha_1 \not\subseteq \m A_2$, then its matrix must contain a
  unit. Remark \ref{rem:facts}(3) then gives the first square in the
  commutative diagram
  \begin{center}
    \begin{tikzpicture}
      \diagram{d}{2.5em}{3em}{
        A_1 & A_2 & A_3 & \cdots & A_n & \Sigma A_1\\
        R \oplus A'_1 & R \oplus A'_2 & A_3 & \cdots & A_n & \Sigma R
        \oplus \Sigma A'_1\\
      };

      \path[->,midway,font=\scriptsize]
        (d-1-1) edge node[above]{$\alpha_1$} (d-1-2)
                     edge node[right]{$\varphi_1$} (d-2-1)
        (d-1-2) edge node[above]{$\alpha_2$} (d-1-3)
                     edge node[right]{$\varphi_2$} (d-2-2)
        (d-1-3) edge node[above]{$\alpha_3$} (d-1-4)
                     edge node[right]{$1$} (d-2-3)
        (d-1-4) edge node[above]{$\alpha_{n - 1}$} (d-1-5)
        (d-1-5) edge node[above]{$\alpha_n$} (d-1-6)
                     edge node[right]{$1$} (d-2-5)
        (d-1-6) edge node[right]{$\Sigma \varphi_1$} (d-2-6)
        (d-2-1) edge node[above]{$\left[
            \begin{smallmatrix}
              1 & 0\\
              0 & \alpha'_1
            \end{smallmatrix}
          \right]$} (d-2-2)
        (d-2-2) edge node[above]{$\left[
            \begin{smallmatrix}
              \alpha'_2 & \alpha''_2
            \end{smallmatrix}
          \right]$} (d-2-3)
        (d-2-3) edge node[above]{$\alpha_3$} (d-2-4)
        (d-2-4) edge node[above]{$\alpha_{n - 1}$} (d-2-5)
        (d-2-5) edge node[above]{$\left[
            \begin{smallmatrix}
              \alpha'_n\\
              \alpha''_n
            \end{smallmatrix}
          \right]$} (d-2-6);
    \end{tikzpicture}
  \end{center}
  whose vertical maps $\varphi_1$ and $\varphi_2$ are isomorphisms,
  and where
  \begin{equation*}
    \begin{array}{r@{\ }c@{\ }l}
      \left[
        \begin{smallmatrix}
          \alpha'_2 & \alpha''_2 \\
        \end{smallmatrix}
      \right] & = &  \alpha_2 \circ \varphi_2^{-1} \\
      \left[
        \begin{smallmatrix}
          \alpha'_n \\
          \alpha''_n 
        \end{smallmatrix}
      \right] & =  & \Sigma \varphi_1 \circ \alpha_n.
    \end{array}
  \end{equation*}
  This is an isomorphism of $n$-$\Sigma$-sequences, and so since
  $A_{\bullet}$ is a periodic exact sequence of free modules (being a
  summand of one), so is the bottom sequence. Therefore the maps
  $\alpha'_2$ and $\alpha'_n$ are zero, and the trivial
  $n$-$\Sigma$-sequence on $R$, which belongs to $\nang$, splits
  off. Consequently, we may assume that all the maps in $A_{\bullet}$
  are minimal.
  
  Since $A_{\bullet}$ is a complex of free modules, the exact sequence
  \begin{equation*}
    0 \to R/ \m \xrightarrow{p} R \to R/ \m \to 0
  \end{equation*}
  induces an exact sequence
  \begin{equation*}
    0 \to R/ \m \otimes_R A_{\bullet} \to A_{\bullet} \to R/ \m
    \otimes_R A_{\bullet} \to 0
  \end{equation*}
  of complexes. The exactness of $A_{\bullet}$ then gives isomorphisms
  $H_i( R/ \m \otimes_R A_{\bullet} ) \simeq H_{i+1}( R/ \m \otimes_R
  A_{\bullet})$ of homology for all $i$, and so since the maps in
  $A_{\bullet}$ are minimal we see that the free modules $A_1, \dots,
  A_n$ are all of the same rank. Combining this (and the minimality of
  the maps in $A_{\bullet}$) with Remark \ref{rem:facts}(3), we see that
  $A_{\bullet}$ is isomorphic to $F(p)_{\bullet}$ for some free module
  $F$. This shows that $A_\bullet$ belongs to the collection $\nang$.

  It remains to show that every morphism
  $A_1 \xrightarrow{\alpha} A_2$ in $\C$ is the first morphism of an
  $n$-$\Sigma$-sequence in $\nang$. But this is easy: by Remark
  \ref{rem:facts}(3) again, the map $\alpha$ is isomorphic to a map
  \begin{equation*}
      F \oplus G \oplus H_1 \xrightarrow{\left[
          \begin{smallmatrix}
            p & 0 & 0\\
            0 & 1 & 0\\
            0 & 0 & 0
          \end{smallmatrix}
        \right]}
      F \oplus G \oplus H_2
    \end{equation*}
    in $\C$. This map is the first in the $n$-$\Sigma$-sequence which
    is the direct sum of $F(p)_\bullet$ and three trivial
    $n$-$\Sigma$-sequences involving the modules $G, H_1$ and $H_2$.
\end{proof}

Next we show that axiom (N2) holds. Here we need the additional
assumption that $2p = 0$ in $R$ when $n$ is odd; this was not needed for
axiom (N1) to hold. To see why we need this extra assumption for odd
$n$, consider the triangulated case (i.e.\ $n=3$) and the
$3$-$\Sigma$-sequence
\begin{equation*}
  R(p)_\bullet \colon \quad R \xrightarrow{p} R \xrightarrow{p} R
  \xrightarrow{p} \Sigma R
\end{equation*}
in $\nang$. Its left rotation is the $3$-$\Sigma$-sequence
\begin{equation*}
  R \xrightarrow{p} R \xrightarrow{p} \Sigma R \xrightarrow{-p} \Sigma
  R
\end{equation*}
and if this is to belong to $\nang$ then it is easily seen that it
must be isomorphic to $R(p)_\bullet$. If
\begin{center}
  \begin{tikzpicture}
    \diagram{d}{2.5em}{2.5em}{
      R & R & R & \Sigma R\\
      R & R & \Sigma R & \Sigma R\\
    };

    \path[->,midway,font=\scriptsize]
      (d-1-1) edge node[above]{$p$} (d-1-2)
                   edge node[right]{$u$} (d-2-1)
      (d-1-2) edge node[above]{$p$} (d-1-3)
                   edge node[right]{$v$} (d-2-2)
      (d-1-3) edge node[above]{$p$} (d-1-4)
                   edge node[right]{$w$} (d-2-3)
      (d-1-4) edge node[right]{$\Sigma u$} (d-2-4)
      (d-2-1) edge node[above]{$p$} (d-2-2)
      (d-2-2) edge node[above]{$p$} (d-2-3)
      (d-2-3) edge node[above]{$-p$} (d-2-4);
  \end{tikzpicture}
\end{center}
is an isomorphism, then the elements $u,v$ and $w$ are units in $R$,
and commutativity of the three squares (from left to right) gives
\begin{equation*}
  p = u^{-1}pv = u^{-1}pw = u^{-1}(-p) \Sigma u.
\end{equation*}
Since $\Sigma$ is the identity functor, we see that $p = -p$. This
argument can easily be generalized to all odd $n$.

A natural question arises: could the extra assumption be removed if we
just \emph{defined} the collection $\nang$ to contain the left
rotation of the $n$-$\Sigma$-sequence $R(p)_\bullet$ whenever $n$ is
odd? Note that for even $n$ this is irrelevant: in this case
$R(p)_\bullet$ equals its left rotation, since there is no change of
sign. However, for odd $n$ the answer is no: we would still need the
assumption that $2p=0$ in $R$. To see why, consider again the case
$n=3$, and suppose that both $R(p)_\bullet$ and its left rotation
belong to $\nang$. Then by axiom (N3), we can complete the solid part
of the diagram
\begin{center}
  \begin{tikzpicture}
    \diagram{d}{2.5em}{2.5em}{
      R & R & R & \Sigma R\\
      R & R & \Sigma R & \Sigma R\\
    };

    \path[->,midway,font=\scriptsize]
      (d-1-1) edge node[above]{$p$} (d-1-2)
                   edge node[right]{$1$} (d-2-1)
      (d-1-2) edge node[above]{$p$} (d-1-3)
                   edge node[right]{$1$} (d-2-2)
      (d-1-3) edge node[above]{$p$} (d-1-4)
                   edge[densely dashed] node[right]{$w$} (d-2-3)
      (d-1-4) edge node[right]{$1$} (d-2-4)
      (d-2-1) edge node[above]{$p$} (d-2-2)
      (d-2-2) edge node[above]{$p$} (d-2-3)
      (d-2-3) edge node[above]{$-p$} (d-2-4);
        \end{tikzpicture}
\end{center}
to a morphism of $3$-$\Sigma$-sequences. The same argument as above
then forces $p$ and $-p$ to be equal in $R$, and this can also be
generalized to all odd $n$.

\begin{proposition}[Axiom (N2)]
  \label{prop:N2}
  Suppose that $n\geq 3$ is an integer, and that $2p=0$ in $R$
  whenever $n$ is odd. Then an $n$-$\Sigma$-sequence belongs to
  $\nang$ if and only if its left rotation does.
\end{proposition}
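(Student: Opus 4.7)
The plan is to reduce to the two generating types of sequences in $\nang$. By its very definition $\nang$ is closed under isomorphism, and by Proposition~\ref{prop:N1} it is closed under direct sums; since both left and right rotation commute with direct sums and with isomorphisms of $n$-$\Sigma$-sequences, it suffices to check that the left (respectively right) rotation of each of the following two building blocks lies in $\nang$: (i) a contractible candidate $n$-angle $C_\bullet$, and (ii) a sequence $F(p)_\bullet$ with $F \in \C$ a finitely generated free module.

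For (i), I claim that the left rotation $L(C_\bullet)$ is again contractible, and therefore lies in $\nang$ by definition. Given a contracting homotopy $\Theta_1,\dots,\Theta_n$ on $C_\bullet$, I would define diagonal maps on $L(C_\bullet)$ by $\Theta_i' \ceq \Theta_{i+1}$ for $i = 1,\dots,n-1$ and $\Theta_n' \ceq (-1)^n \Sigma\Theta_1$. A routine check, using the original homotopy relations and the extra sign $(-1)^n$ on the last arrow of $L(C_\bullet)$, shows that $\Theta_\bullet'$ is a contracting homotopy on $L(C_\bullet)$; the right-rotation case is symmetric. No hypothesis on $R$ enters here.

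For (ii), the left rotation of $F(p)_\bullet$ is
\[
F \xrightarrow{p} F \xrightarrow{p} \cdots \xrightarrow{p} F \xrightarrow{(-1)^n p} \Sigma F,
\]
which equals $F(p)_\bullet$ when $n$ is even. For odd $n$, I would exhibit an explicit isomorphism to $F(p)_\bullet$ with vertical components $\varphi_1 = \cdots = \varphi_{n-1} = 1_F$ and $\varphi_n = -1_F$. Commutativity of each of the $n$ squares then holds automatically except for the $(n-1)$-th, which reduces exactly to $-p = p$ in $R$, equivalently $2p = 0$; this is precisely the standing hypothesis for odd $n$. Right rotation of $F(p)_\bullet$ differs from $F(p)_\bullet$ only in that the sign $(-1)^n$ now sits on the \emph{first} arrow, and a symmetric sign-shuffling isomorphism (e.g.\ $\varphi_1 = -1_F$ and $\varphi_i = 1_F$ for $i \geq 2$) handles that case under the same hypothesis.

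The main obstacle is careful bookkeeping: tracking the $(-1)^n$ signs through the rotation formulas and recognising that the unique square whose commutativity is not automatic is precisely the one encoding $2p = 0$ for odd $n$. The discussion preceding the proposition already records that this hypothesis is necessary, so once sufficiency is established in both directions by the above argument, the biconditional in axiom (N2) follows.
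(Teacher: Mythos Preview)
Your proof is correct and follows essentially the same strategy as the paper: reduce to the two building blocks, observe that contractibility is preserved under rotation, and check directly what rotation does to $F(p)_\bullet$. The only difference is that the paper observes a small simplification you missed: under the hypothesis $2p=0$ (for odd $n$) one has $-p=p$, so the left rotation of $F(p)_\bullet$ is literally \emph{equal} to $F(p)_\bullet$ rather than merely isomorphic, and your explicit isomorphism with $\varphi_n=-1_F$ is unnecessary.
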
 

\begin{proof}
  A candidate $n$-angle is obviously contractible if and only if its
  left rotation is, regardless of the extra assumption. For a free
  module $F \in \C$, the left rotation of $F(p)_\bullet$ is the
  $n$-$\Sigma$-sequence
  \begin{equation*}
    F \xrightarrow{p} F \xrightarrow{p} \cdots \xrightarrow{p}
    \Sigma F \xrightarrow{(-1)^np} \Sigma F.
  \end{equation*}
  Since the functor $\Sigma$ is the identity, this
  $n$-$\Sigma$-sequence equals $F(p)_\bullet$ whenever $n$ is even,
  or whenever $n$ is odd and $2p=0$ in $R$.
\end{proof}

Before we show that axioms (N3) and (N4) hold, we show that they hold
if we only consider sequences of the form $F(p)_\bullet$ for a free
module $F$. The proof that the two axioms hold for
\emph{all} sequences in $\nang$ reduces to this special case.

\begin{lemma}
  \label{lem:cone}
  Suppose that $n\geq 3$ is an integer, and that $2p=0$ in $R$
  whenever $n$ is odd. Then for all free modules $F,G \in \C$, the
  solid part of each commutative diagram
  \begin{center}
    \begin{tikzpicture}
      \diagram{d}{2.5em}{2.5em}{
        F & F & F & \cdots & F & \Sigma F \\
        G & G & G & \cdots & G & \Sigma G \\
      };

      \path[->,midway,font=\scriptsize]
        (d-1-1) edge node[above]{$p$} (d-1-2)
                     edge node[right]{$\psi_1$} (d-2-1)
        (d-1-2) edge node[above]{$p$} (d-1-3)
                     edge node[right]{$\psi_2$} (d-2-2)
        (d-1-3) edge node[above]{$p$} (d-1-4)
                     edge[densely dashed] node[right]{$\psi_3$} (d-2-3)
        (d-1-4) edge node[above]{$p$} (d-1-5)
        (d-1-5) edge node[above]{$p$} (d-1-6)
                     edge[densely dashed] node[right]{$\psi_n$} (d-2-5)
        (d-1-6) edge node[right]{$\Sigma \psi_{1}$} (d-2-6)
        (d-2-1) edge node[above]{$p$} (d-2-2)
        (d-2-2) edge node[above]{$p$} (d-2-3)
        (d-2-3) edge node[above]{$p$} (d-2-4)
        (d-2-4) edge node[above]{$p$} (d-2-5)
        (d-2-5) edge node[above]{$p$} (d-2-6);
    \end{tikzpicture}
  \end{center}
  can be completed to a morphism of $n$-$\Sigma$-sequences, in such a
  way that the mapping cone belongs to $\nang$.
\end{lemma}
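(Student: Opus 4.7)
Since $p$ is central in $R$, commutativity of the first square gives $p(\psi_1-\psi_2)=0$; thus $\psi_1 - \psi_2$ takes values in the kernel of multiplication by $p$ on $G$, which equals $\m G = pG$, and since $F$ is free we may write $\psi_2 = \psi_1 - p\theta$ for some $\theta \in \Hom_R(F,G)$. The plan is to define the completion $\psi_3, \ldots, \psi_n$ so that every consecutive difference $\psi_{i+1}-\psi_i$ lies in $p\Hom_R(F,G)$; this automatically makes every square commute as well as the closing equation $p\psi_n = \psi_1 p$, and thus yields a bona fide morphism of $n$-$\Sigma$-sequences. A first attempt is to set $\psi_i = \psi_1$ for all $i \geq 3$, but a refinement---perturbing one or two of the $\psi_i$ by suitable elements of $p\Hom_R(F,G)$ built from $\theta$---will be needed to enforce the monodromy condition discussed below.

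With such a completion in place the mapping cone is a candidate $n$-angle, and moreover it is exact as a complex of free $R$-modules. Exactness is verified by a direct element chase: given $(x,y)$ in the kernel of a cone matrix $M_{i+1} = \left[\begin{smallmatrix}-p & 0\\ \psi_{i+2} & p\end{smallmatrix}\right]$, one writes $x = pc$ (since $x \in pF$), deduces $y = -\psi_{i+2}(c) + pd$ from the second coordinate vanishing, and checks that $(-c,\, d + \theta'(c))$ is a preimage under $M_i$, where $\theta'$ is any lift with $p\theta' = \psi_{i+1}-\psi_{i+2}$. A parallel computation shows that the compositions $M_{i+1}M_i$ all vanish, confirming that the cone is a candidate $n$-angle.

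With exactness established, the cone fits precisely the template of the argument in Proposition~\ref{prop:N1}: it is an exact, $n$-periodic complex of finitely generated free $R$-modules. The strategy is to iteratively split off trivial $n$-$\Sigma$-sequences by applying Remark~\ref{rem:facts}(3) at any cone map that has a unit entry (using exactness to propagate the splitting across the cycle), thereby reducing to a minimal residual complex. The homology argument via $0 \to R/\m \to R \to R/\m \to 0$ then forces all modules in the residual to have the same rank, and a final application of Remark~\ref{rem:facts}(3) identifies this residual with $F''(p)_\bullet$ for some free module $F''$. The cone is therefore isomorphic to a direct sum of trivial $n$-$\Sigma$-sequences (which together form a contractible candidate $n$-angle) and an $F''(p)_\bullet$, so it belongs to $\nang$.

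The main obstacle lies in this last identification step: it demands the compatibility of the local decompositions $\alpha_i = p \tilde\alpha_i$ (with each $\tilde\alpha_i$ an isomorphism) as one travels once around the $n$-cycle, i.e., the product $\tilde\alpha_n \cdots \tilde\alpha_1$ must reduce to the identity modulo $\m$. This ``monodromy'' condition generally fails for a naive choice of $\psi_3, \ldots, \psi_n$ (and indeed one can exhibit small examples where the constant choice $\psi_i = \psi_1$ produces a cone not in $\nang$); the hard part of the proof is to exhibit, using the freedom in the completion and the structure of $\psi_1$ (put into the Smith normal form of Remark~\ref{rem:facts}(3)), a choice that solves it. The hypothesis $2p = 0$ for odd $n$ enters here, absorbing the $(-1)^n$ sign that appears in the cone differentials, exactly as in the discussion preceding Proposition~\ref{prop:N2}.
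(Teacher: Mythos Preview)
Your proposal outlines a plausible strategy but leaves the decisive step undone: you yourself flag the ``monodromy'' condition on the minimalized cone as ``the hard part of the proof'' and then do not carry it out. This is a genuine gap, not a technicality. Exactness and minimality alone do \emph{not} force an $n$-periodic complex to be isomorphic to some $F''(p)_\bullet$: over $R=\Z/(9)$ with $n=4$, the sequence $R\xrightarrow{3}R\xrightarrow{3}R\xrightarrow{3}R\xrightarrow{6}R$ is exact and minimal but lies in $\nang_2\neq\nang_1$ (cf.\ Theorem~\ref{thm:several}), so the template of Proposition~\ref{prop:N1} cannot be applied blindly to an arbitrary exact cone. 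Moreover, the freedom you invoke --- perturbing the $\psi_i$ for $i\geq 3$ by elements of $p\Hom_R(F,G)$ --- changes the cone differentials only in $\m\cdot\Hom$, so it does not obviously move the monodromy class at all; you would need a separate argument to show that the monodromy of the cone is always trivial, and you give none.

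The paper sidesteps this entirely by a different mechanism. Rather than analysing the cone of an arbitrary completion, it writes $\psi_i=\psi+p\theta_i$ with $\psi$ a matrix of units and zeros, takes the explicit completion $(\psi_1,\psi_2,\psi_2-p\theta_1,\psi,\dots,\psi)$, and exhibits a homotopy from this morphism to the constant morphism $(\psi,\dots,\psi)$. By Lemma~\ref{lem:homotopic}(2) the cones are isomorphic, so it suffices to treat the cone of $(\psi,\dots,\psi)$. After conjugating $\psi$ into Smith normal form $\left[\begin{smallmatrix}I&0\\0&0\end{smallmatrix}\right]$, that cone splits as a direct sum of $G_2(p)_\bullet$, $\overline{F_2(p)}_\bullet$, and the cone of the identity on $F_1(p)_\bullet$; the first is in $\nang$ by definition, the second by the sign argument (using $2p=0$ when $n$ is odd), and the third is contractible by Lemma~\ref{lem:contractible}(2). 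No monodromy computation is ever needed.
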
 

\begin{proof}
  Decompose the matrices $\psi_1$ and $\psi_2$ as
  $\psi_i = \psi_i' + p \theta_i$, where $\psi_i'$ is a matrix whose
  entries are just units (if any) and zeros. The equality
  $p \psi_1 = p \psi_2$ implies that $p \psi_1' = p \psi_2'$, hence
  the matrices $\psi_1'$ and $\psi_2'$ must be equal; we denote this
  matrix by $\psi$. Since $p \psi_i = p \psi$, we can complete the
  given diagram to a morphism of $n$-$\Sigma$-sequences by taking
  $\psi_3 = \psi_2-p \theta_1$ and $\psi_4 = \cdots = \psi_n = \psi$
  (if $n \geq 4$), giving the morphism $(\psi_1, \psi_2, \psi_2-p
  \theta_1, \psi, \dots, \psi)$. We shall prove that the mapping cone
  of this morphism belongs to $\nang$.

  Consider the even simpler morphism $(\psi, \psi, \dots, \psi)$
  between $F(p)_\bullet$ and $G(p)_\bullet$. The diagram
  \begin{center}
    \begin{tikzpicture}
      \diagram{d}{4em}{4.5em}{
        F & F & F & F & \cdots & F & \Sigma F \\
        G & G & G & G & \cdots & G & \Sigma G \\
      };

      \path[->,midway,font=\scriptsize]
        (d-1-1) edge node[above]{$p$} (d-1-2)
        ([xshift=-1mm] d-1-1.south) edge node[left]{$\psi_1$}
        ([xshift=-1mm] d-2-1.north)
        ([xshift=1mm] d-1-1.south) edge node[right]{$\psi$}
        ([xshift=1mm] d-2-1.north)
        (d-1-2) edge node[above]{$p$} (d-1-3)
                     edge node[yshift=-2pt,above left,
                       pos=0.3]{$\theta_1$} (d-2-1)
        ([xshift=-1mm] d-1-2.south) edge node[left]{$\psi_2$}
        ([xshift=-1mm] d-2-2.north)
        ([xshift=1mm] d-1-2.south) edge node[right]{$\psi$}
        ([xshift=1mm] d-2-2.north)
        (d-1-3) edge node[above]{$p$} (d-1-4)
                     edge node[yshift=-2pt,above left,
                       pos=0.3]{$\theta_2 - \theta_1$} (d-2-2)
        ([xshift=-1mm] d-1-3.south) edge
          node[left]{$\overline{\psi}_2$}
        ([xshift=-1mm] d-2-3.north)
        ([xshift=1mm] d-1-3.south) edge node[right]{$\psi$}
        ([xshift=1mm] d-2-3.north)
        (d-1-4) edge node[above]{$p$} (d-1-5)
                     edge node[yshift=-2pt,above left, pos=0.3]{$0$}
                       (d-2-3)
        ([xshift=-1mm] d-1-4.south) edge node[left]{$\psi$}
        ([xshift=-1mm] d-2-4.north)
        ([xshift=1mm] d-1-4.south) edge node[right]{$\psi$}
        ([xshift=1mm] d-2-4.north)
        (d-1-5) edge node[above]{$p$} (d-1-6)
        ([xshift=-1mm] d-1-6.south) edge node[left]{$ \psi$}
        ([xshift=-1mm] d-2-6.north)
        ([xshift=1mm] d-1-6.south) edge node[right]{$\psi$}
        ([xshift=1mm] d-2-6.north)
        (d-1-6) edge node[above]{$p$} (d-1-7)
        (d-1-7) edge node[yshift=-2pt,above left, pos=0.3]{$0$}
          (d-2-6)
        ([xshift=-1mm] d-1-7.south) edge node[left]{$\Sigma \psi_1$}
        ([xshift=-1mm] d-2-7.north)
        ([xshift=1mm] d-1-7.south) edge node[right]{$\Sigma \psi$}
        ([xshift=1mm] d-2-7.north)
        (d-2-1) edge node[above]{$p$} (d-2-2)
        (d-2-2) edge node[above]{$p$} (d-2-3)
        (d-2-3) edge node[above]{$p$} (d-2-4)
        (d-2-4) edge node[above]{$p$} (d-2-5)
        (d-2-5) edge node[above]{$p$} (d-2-6)
        (d-2-6) edge node[above]{$p$} (d-2-7);
    \end{tikzpicture}
  \end{center}
  displays a homotopy between the two morphisms (where
  $\overline{\psi}_2 = \psi_2 -p \theta_1$), hence by Lemma
  \ref{lem:homotopic}(2) the mapping cone of $( \psi_1, \psi_2,
  \psi_2-p \theta_1, \psi, \dots, \psi )$ is isomorphic to that of $(
  \psi, \psi, \dots, \psi )$. It suffices therefore to show that the
  mapping cone of $( \psi, \psi, \dots, \psi )$ belongs to $\nang$.

  If $\psi = 0$, then the mapping cone is just the direct sum of
  $G(p)_\bullet$ and $\overline{F(p)}_\bullet$, where the latter is
  $F(p)_\bullet$ but with a sign change on all the maps. When $n$ is
  odd, the assumption $2p=0$ gives
  $\overline{F(p)}_\bullet = F(p)_\bullet$, whereas when $n$ is even,
  the diagram
  \begin{center}
    \begin{tikzpicture}
      \diagram{d}{2.5em}{2.5em}{
        F & F & F & \cdots & F & \Sigma F \\
        F & F & F & \cdots & F & \Sigma F \\
      };

      \path[->,midway,font=\scriptsize]
        (d-1-1) edge node[above]{$p$} (d-1-2)
                     edge node[right]{$1$} (d-2-1)
        (d-1-2) edge node[above]{$p$} (d-1-3)
                     edge node[right]{$-1$} (d-2-2)
        (d-1-3) edge node[above]{$p$} (d-1-4)
                     edge node[right]{$1$} (d-2-3)
        (d-1-4) edge node[above]{$p$} (d-1-5)
        (d-1-5) edge node[above]{$p$} (d-1-6)
                     edge node[right]{$-1$} (d-2-5)
        (d-1-6) edge node[right]{$1$} (d-2-6)
        (d-2-1) edge node[above]{$-p$} (d-2-2)
        (d-2-2) edge node[above]{$-p$} (d-2-3)
        (d-2-3) edge node[above]{$-p$} (d-2-4)
        (d-2-4) edge node[above]{$-p$} (d-2-5)
        (d-2-5) edge node[above]{$-p$} (d-2-6);
    \end{tikzpicture}
  \end{center}
  shows that the $n$-$\Sigma$-sequences $F(p)_\bullet$ and
  $\overline{F(p)}_\bullet$ are isomorphic. In either case the
  sequence $\overline{F(p)}_\bullet$, and therefore also the mapping
  cone of the morphism $(0,0, \dots, 0)$, belongs to $\nang$.

  Suppose $\psi$ is nonzero, so that it contains at least one unit. As
  in Remark \ref{rem:facts}(3), by performing appropriate row and
  column operations on $\psi$, we obtain a matrix $\widetilde{\psi}$
  of the form
  \begin{equation*}
    \begin{bmatrix}
      I & 0\\
      0 & Z
    \end{bmatrix}
  \end{equation*}
  where $I$ is a square identity matrix and $Z$ a zero matrix. The
  mapping cone of $(\psi, \dots, \psi)$ is then isomorphic to that of
  $(\widetilde{\psi}, \dots, \widetilde{\psi})$, hence it suffices to
  show that the latter belongs to $\nang$.

  The form xsof the matrix $\widetilde{\psi}$ implies that the modules
  $F$ and $G$ decompose in $\C$ as $F = F_1 \oplus F_2$ and
  $G = F_1 \oplus G_2$, such that the map
  \begin{equation*}
    F_1 \oplus F_2 \xrightarrow{ \left[
        \begin{smallmatrix}
          1 & 0\\
          0 & 0
        \end{smallmatrix}
      \right] } F_1 \oplus G_2
  \end{equation*}
  equals the map $\widetilde{\psi}$. The mapping cone of the morphism
  $(\widetilde{\psi}, \dots, \widetilde{\psi} )$ is then the direct
  sum of three $n$-$\Sigma$-sequences, two of which are
  $G_2(p)_\bullet$ and $\overline{F_2(p)}_\bullet$, where we have used
  the same notation as earlier in the proof. As we have seen, the
  sequence $\overline{F_2(p)}_\bullet$ (and also $G_2(p)_\bullet$)
  belong to $\nang$. The third summand is the mapping cone of the
  identity morphism on $F_1(p)_{\bullet}$, and by Lemma
  \ref{lem:contractible}(2), this $n$-$\Sigma$-sequence is
  contractible and belongs to $\nang$. This shows that the mapping
  cone of $(\widetilde{\psi}, \dots, \widetilde{\psi} )$ belongs to
  $\nang$.
\end{proof}

Having proved the special case, we now show that axioms (N3) and (N4)
hold for all sequences in $\nang$.

\begin{proposition}[Axiom (N3)/(N4)]
  \label{prop:N3/N4}
  Suppose that $n\geq 3$ is an integer, and that $2p=0$ in $R$
  whenever $n$ is odd. Then the solid part of each commutative diagram
  \begin{center}
    \begin{tikzpicture}
      \diagram{d}{2.5em}{2.5em}{
        A_1 & A_2 & A_3 & \cdots & A_n & \Sigma A_1\\
        B_1 & B_2 & B_3 & \cdots & B_n & \Sigma B_1\\
      };

      \path[->,midway,font=\scriptsize]
        (d-1-1) edge node[above]{$\alpha_1$} (d-1-2)
                     edge node[right]{$\varphi_1$} (d-2-1)
        (d-1-2) edge node[above]{$\alpha_2$} (d-1-3)
                     edge node[right]{$\varphi_2$} (d-2-2)
        (d-1-3) edge node[above]{$\alpha_3$} (d-1-4)
                     edge[densely dashed] node[right]{$\varphi_3$} (d-2-3)
        (d-1-4) edge node[above]{$\alpha_{n - 1}$} (d-1-5)
        (d-1-5) edge node[above]{$\alpha_n$} (d-1-6)
                     edge[densely dashed] node[right]{$\varphi_n$} (d-2-5)
        (d-1-6) edge node[right]{$\Sigma\varphi_{1}$} (d-2-6)
        (d-2-1) edge node[above]{$\beta_1$} (d-2-2)
        (d-2-2) edge node[above]{$\beta_2$} (d-2-3)
        (d-2-3) edge node[above]{$\beta_3$} (d-2-4)
        (d-2-4) edge node[above]{$\beta_{n - 1}$} (d-2-5)
        (d-2-5) edge node[above]{$\beta_n$} (d-2-6);
    \end{tikzpicture}
  \end{center}
  with rows in $\nang$ can be completed to a morphism of
  $n$-$\Sigma$-sequences, in such a way that the mapping cone belongs
  to $\nang$.
\end{proposition}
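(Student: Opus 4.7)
The plan is to reduce the general case to Lemma \ref{lem:cone} by exploiting the defining structure of $\nang$ together with the homotopy machinery developed earlier. Since both rows lie in $\nang$, up to isomorphism we may write
\[
A_\bullet = C^A_\bullet \oplus F(p)_\bullet, \qquad B_\bullet = C^B_\bullet \oplus G(p)_\bullet,
\]
with $C^A_\bullet, C^B_\bullet$ contractible candidate $n$-angles and $F, G$ finitely generated free $R$-modules. Relative to these decompositions, the given $\varphi_1, \varphi_2$ are $2 \times 2$ block matrices, and their $(F, G)$-blocks form a partial morphism $F(p)_\bullet \to G(p)_\bullet$ of the type handled by Lemma \ref{lem:cone}.

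I would then produce an extension $(\varphi_3, \ldots, \varphi_n)$ blockwise: the $(F, G)$-block is extended via Lemma \ref{lem:cone}, while the three remaining blocks (all of which involve at least one of $C^A_\bullet, C^B_\bullet$) are extended step by step from the contracting homotopies of $C^A_\bullet$ and $C^B_\bullet$, in the spirit of the construction in the proof of Lemma \ref{lem:contractible}(1). This supplies the required completion, and what remains is to verify that the resulting mapping cone belongs to $\nang$. Here the main tool is Lemma \ref{lem:homotopic}(2): replacing the extension by a homotopic \emph{block-diagonal} morphism $\psi$ does not change the isomorphism class of the mapping cone. The required homotopy $\Theta = (\Theta_1, \ldots, \Theta_n)$ is assembled from the off-diagonal blocks of $\varphi_1, \varphi_2$ via the contracting homotopies of $C^A_\bullet$ and $C^B_\bullet$, using the commutativity relation $\beta_1 \varphi_1 = \varphi_2 \alpha_1$ as the key algebraic identity. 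The mapping cone of such a block-diagonal $\psi$ then splits as a direct sum of its $(F, G)$-cone, which lies in $\nang$ by Lemma \ref{lem:cone}, and its $(C^A, C^B)$-cone; the latter is the mapping cone of a morphism between contractible candidate $n$-angles, hence (by Lemma \ref{lem:homotopic}(1) together with Lemma \ref{lem:homotopic}(2)) isomorphic to the mapping cone of the zero morphism, which is a direct sum of $C^B_\bullet$ and a shifted, sign-flipped copy of $C^A_\bullet$, both contractible, and so lies in $\nang$ by Lemma \ref{lem:contractible}(1).

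The main technical obstacle is the construction of the block-diagonalizing homotopy $\Theta$: one must simultaneously cancel the off-diagonal blocks of both $\varphi_1$ and $\varphi_2$, which requires solving a compatible system of equations for the components of $\Theta_1$ and $\Theta_n$ (and analogous components of the other $\Theta_i$ if one block-diagonalizes the whole extension). The contracting homotopies of $C^A$ and $C^B$ allow one to ``invert'' $\beta^C_1$ and $\alpha^C_1$ up to boundaries, and the relations extracted from $\beta_1 \varphi_1 = \varphi_2 \alpha_1$, such as $\beta^C_1 b_1 = p b_2$ (where $b_1, b_2$ denote the $(F, C^B)$-blocks of $\varphi_1, \varphi_2$), ensure that these equations are solvable. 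Once this is in place, the remainder of the argument is a bookkeeping assembly of Lemmas \ref{lem:cone}, \ref{lem:homotopic}, and \ref{lem:contractible}.
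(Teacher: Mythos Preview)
Your approach is correct and uses the same essential ingredients as the paper's proof (Lemma~\ref{lem:homotopic}, Lemma~\ref{lem:contractible}, and the reduction to Lemma~\ref{lem:cone}), but the organization differs. The paper does not work with the full contractible summands $C^A_\bullet, C^B_\bullet$ at once; instead it invokes Remark~\ref{rem:facts}(5) (in $\C$ idempotents split, so every contractible candidate $n$-angle is a direct sum of \emph{trivial} sequences) and peels off one trivial summand $T_\bullet$ at a time. For a trivial $T_\bullet$ the extension of $(\varphi_1^T,\varphi_2^T)$ to a full morphism $\varphi^T\colon T_\bullet\to B_\bullet$ is genuinely immediate (at most two terms are nonzero), and then Lemma~\ref{lem:homotopic}(1) makes $\varphi^T$ nullhomotopic, so $[\varphi^T\ \varphi']$ is homotopic to $[0\ \varphi']$, whose cone visibly splits as $C_{\varphi'}$ plus a rotated trivial sequence. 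Iterating strips all trivial summands from $A_\bullet$, and a dual argument (or $\Hom_R(-,R)$-duality) handles $B_\bullet$, reducing to Lemma~\ref{lem:cone}.

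Your $2\times 2$ block strategy also works, but you make the key step harder than it is. Once each off-diagonal block of $\varphi$ has been extended to a full morphism of $n$-$\Sigma$-sequences, Lemma~\ref{lem:homotopic}(1) \emph{directly} gives a nullhomotopy of that block (its source or target is contractible); placing these nullhomotopies in the off-diagonal positions of $\Theta$ yields the block-diagonalizing homotopy with no ``compatible system of equations'' to solve. The genuinely delicate point in your route is rather the \emph{extension} step itself: completing, say, the $(F,C^B)$-block to a full morphism $F(p)_\bullet\to C^B_\bullet$ requires checking that the cyclic closing relation $p\,b_1=\beta^B_n b_n$ can be arranged, which is not quite in the spirit of Lemma~\ref{lem:contractible}(1). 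This is where decomposing $C^B_\bullet$ into trivial summands (as the paper implicitly does) makes life easier; without that decomposition you would need to argue more carefully using the contracting homotopy and the structure of $p$-torsion in free $R$-modules.
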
 

\begin{proof}
  Suppose that $A_\bullet$ contains a trivial $n$-$\Sigma$-sequence
  $T_\bullet$ as a direct summand, so that $A_\bullet$ is isomorphic
  to a direct sum $T_\bullet \oplus A'_\bullet$ for some
  $n$-$\Sigma$-sequence $A'_\bullet \in \nang$ (both $T_\bullet$ and
  $A'_\bullet$ belong to $\nang$ by Proposition \ref{prop:N1}). Then
  the maps $\varphi_1$ and $\varphi_2$ decompose as
  $\varphi_i = \left[ \begin{smallmatrix} \varphi_i^T &
      \varphi_i' \end{smallmatrix} \right]$,
  where $\varphi_i^T \in \Hom_{\C}(T_i,B_i)$ and
  $\varphi'_i \in \Hom_{\C}(A'_i,B_i)$. Since $T_\bullet$ is a trivial
  $n$-$\Sigma$-sequence, it is easily seen that the two maps
  $\varphi_1^T$ and $\varphi_2^T$ can be completed to a morphism
  $\varphi^T \colon T_\bullet \to B_\bullet$ of
  $n$-$\Sigma$-sequences.
  
  Suppose now that the two maps $\varphi'_1$ and $\varphi'_2$ can be
  completed to a morphism $\varphi' \colon A'_\bullet \to B_\bullet$
  of $n$-$\Sigma$-sequences in such a way that the mapping cone
  $C_{\varphi'}$ belongs to $\nang$. Together, the two morphisms
  $\varphi^T$ and $\varphi'$ form a morphism
  \begin{equation*}
    \left[
      \begin{smallmatrix}
        \varphi^T & \varphi'
      \end{smallmatrix}
    \right] \colon T_\bullet \oplus A'_\bullet \to B_\bullet
  \end{equation*}
  whose first two vertical maps are $\varphi_1$ and $\varphi_2$. Now
  since $T_\bullet$ is trivial (and therefore contractible), the
  morphism $\varphi^T$ is nullhomotopic by Lemma
  \ref{lem:homotopic}(1). But then $\left[ \begin{smallmatrix}
      \varphi^T & \varphi' \end{smallmatrix} \right]$ is homotopic to
  the morphism $\left[ \begin{smallmatrix} 0 &
      \varphi' \end{smallmatrix} \right]$, and so by Lemma
  \ref{lem:homotopic}(2) these two morphisms have isomorphic mapping
  cones. The mapping cone of $\left[ \begin{smallmatrix} 0 &
      \varphi' \end{smallmatrix} \right]$ is the direct sum of
  $C_{\varphi'}$ and the left rotation of $T_\bullet$, the latter
  possibly with a sign change in the only nonzero map. This is easily
  seen to be isomorphic to the left rotation of $T_\bullet$, hence the
  mapping cone of $\left[ \begin{smallmatrix} 0 &
      \varphi' \end{smallmatrix} \right]$, and therefore also the
  mapping cone of $\left[ \begin{smallmatrix} \varphi^T &
      \varphi' \end{smallmatrix} \right]$, belongs to $\nang$.

  The above shows that we can ``remove'' any trivial summands of
  $A_\bullet$. Similarly, we can remove trivial summands of
  $B_\bullet$; a similar argument holds, or we can use the above
  argument together with the duality $\Hom_R(-,R)$ on $\C$ (the ring
  $R$ is selfinjective, cf.\ Remark \ref{rem:facts}(2)). Since the
  contractible candidate $n$-angles are just direct sums of trivial
  ones (cf.\ Remark \ref{rem:facts}(5)), it follows that we can reduce
  to the case when $A_\bullet = F(p)_\bullet$ and
  $B_\bullet = G(p)_\bullet$ for free modules $F$ and $G$. Then the
  result is just the previous one, Lemma \ref{lem:cone}.
\end{proof}

We summarize everything in the following theorem. 

\begin{theorem}
  \label{thm:main}
  Let $R$ be a commutative local ring with maximal principal ideal $\m
  = (p)$ satisfying $\m^2 =0$, and $\C$ the category of finitely
  generated free $R$-modules. Furthermore, let $n \geq 3$ be an
  integer and $\Sigma \colon \C \to \C$ the identity functor. Finally,
  let $\nang$ be the collection of all $n$-$\Sigma$-sequences
  isomorphic to a direct sum $C_\bullet \oplus F(p)_\bullet$,
  where $C_\bullet$ is a contractible candidate $n$-angle, $F$ is a
  free module in $\C$ and $F(p)_\bullet$ is the
  $n$-$\Sigma$-sequence
  \begin{equation*}
    F \xrightarrow{p} F \xrightarrow{p} \cdots \xrightarrow{p} F
    \xrightarrow{p} \Sigma F.
  \end{equation*}
  Then $(\C, \Sigma, \nang)$ is an $n$-angulated category whenever $n$
  is even, or when $n$ is odd and $2p =0$ in $R$.
\end{theorem}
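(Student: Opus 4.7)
The plan is to assemble the theorem directly from the three preparatory results Propositions \ref{prop:N1}, \ref{prop:N2}, and \ref{prop:N3/N4}, which together verify that $\nang$ satisfies the defining axioms (N1), (N2), (N3), (N4) of an $n$-angulation. Since $\nang$ is defined as the closure under isomorphism of direct sums $C_\bullet \oplus F(p)_\bullet$ of contractible candidate $n$-angles and ``Koszul-type'' sequences, and since each of these propositions is phrased for precisely this $\nang$ under the hypotheses of the theorem, the work of the proof consists essentially of checking that the hypotheses of the three propositions match those of the theorem and then chaining them.

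First I would invoke Proposition \ref{prop:N1}, which holds for every $n \geq 3$ with no hypothesis on $2p$; this immediately gives closure under direct sums, summands and isomorphisms, membership of all trivial $n$-$\Sigma$-sequences, and the existence of an $n$-angle with prescribed first morphism. Next I would invoke Proposition \ref{prop:N2} to obtain axiom (N2); this is where the $2p=0$ hypothesis enters when $n$ is odd, since the left rotation of $F(p)_\bullet$ ends with the map $(-1)^n p$, and one needs $(-1)^n p = p$ for this rotated sequence to coincide with an element of $\nang$. For even $n$ the hypothesis is vacuous, while for odd $n$ it is unavoidable (as the discussion preceding Proposition \ref{prop:N2} already shows).

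Finally, I would invoke Proposition \ref{prop:N3/N4} to obtain axioms (N3) and (N4) simultaneously, under the same parity hypothesis. The strategy there is first to use the fact that trivial summands can be stripped off from either $A_\bullet$ or $B_\bullet$ without affecting membership of the mapping cone in $\nang$: on the $A_\bullet$ side this follows by completing the trivial part via Lemma \ref{lem:homotopic}(1), and on the $B_\bullet$ side by dualising using the selfinjectivity of $R$ (Remark \ref{rem:facts}(2)). Combined with the splitting of contractible candidate $n$-angles into trivial summands (Remark \ref{rem:facts}(5)), this reduces the verification to the case $A_\bullet = F(p)_\bullet$ and $B_\bullet = G(p)_\bullet$, which is precisely Lemma \ref{lem:cone}.

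If I were carrying this out from scratch, the genuine obstacle would not be the assembly step but the reduction baked into Proposition \ref{prop:N3/N4}: one has to track very carefully how mapping cones transform when trivial summands on either side are absorbed or shifted, and in particular that the extra trivial summand which appears (as the left rotation of $T_\bullet$, possibly with a sign change) is itself in $\nang$. Given the machinery already established, however, the proof of the theorem reduces to a one-line citation of the three propositions.
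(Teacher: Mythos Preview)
Your proposal is correct and matches the paper's approach exactly: the paper introduces Theorem~\ref{thm:main} with the sentence ``We summarize everything in the following theorem'' and gives no separate proof, so the theorem is literally just the conjunction of Propositions~\ref{prop:N1}, \ref{prop:N2}, and \ref{prop:N3/N4}. Your additional commentary on where each hypothesis is used and how the reduction in Proposition~\ref{prop:N3/N4} works is accurate and faithful to the paper's arguments.
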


\section{Classes of $n$-angles}
\label{sec:properties}

In this section we explore some properties of the $n$-angulated
categories we have constructed. The question we address is: how
many collections of $n$-angles does the underlying suspended category
$(\C, \Sigma)$ admit? For a general triangulated category, this was
studied in \cite{Balmer}, where an example of an algebraic suspended
category with infinitely many triangulations was given. As Balmer
notes in \emph{loc.\ cit.}, for the (topological) stable homotopy
category this is implicit in \cite{Heller}.

We have already seen a glimpse of what to come. Recall from the
discussion preceding Proposition \ref{prop:N2} that for odd $n$, the
$n$-$\Sigma$-sequence
\begin{equation*}
  R \xrightarrow{p} R \xrightarrow{p} \cdots \xrightarrow{p} R
  \xrightarrow{-p} \Sigma R
\end{equation*}
cannot belong to the collection of $n$-angles we have defined so far,
unless we require that $p = -p$. On the other hand, it must belong to
the collection if the rotation axiom (N2) is to be satisfied, and this
is precisely why we must require that $p = -p$ for odd $n$. It is
straightforward to show that this $n$-$\Sigma$-sequence is isomorphic
to the sequence
\begin{equation*}
  R \xrightarrow{-p} R \xrightarrow{p} \cdots \xrightarrow{p} R
  \xrightarrow{p} \Sigma R
\end{equation*}
and that also for even $n$ it cannot belong to the collection of
$n$-angles unless $p = -p$. But what if, for even $n$, we define the
collection of $n$-angles to contain this new sequence \emph{instead}
of the sequence
\begin{equation*}
  R \xrightarrow{p} R \xrightarrow{p} \cdots \xrightarrow{p} R
  \xrightarrow{p} \Sigma R
\end{equation*}
that we have used so far? The following result shows that we would
still get an $n$-angulated category. In fact, for every unit $u$ in
$R$, the $n$-$\Sigma$-sequence
\begin{equation*}
  R \xrightarrow{up} R \xrightarrow{p} \cdots \xrightarrow{p} R
  \xrightarrow{p} \Sigma R
\end{equation*}
gives rise to an $n$-angulation of $(\C, \Sigma)$, and every
$n$-angulation is obtained this way. Moreover, two such
$n$-angulations coincide if and only if the defining units $u$ and $v$
satisfy $up = vp$ in the ring $R$. These various $n$-angulations arise
from global automorphisms on the underlying category, introduced by
Balmer in \cite{Balmer}. One can obtain the result by applying
\cite[Proposition 3.4]{GKO}.

\begin{theorem}
  \label{thm:several}
  Let $R$ be a commutative local ring with maximal principal ideal
  $\m = (p)$ satisfying $\m^2 =0$, and $\C$ the category of finitely
  generated free $R$-modules. Furthermore, let $n \geq 3$ be an
  integer and $\Sigma \colon \C \to \C$ the identity functor. Finally,
  for each unit $u \in R$, let $\nang_u$ be the collection of all
  $n$-$\Sigma$-sequences isomorphic to a direct sum
  $C_\bullet \oplus F(p)_\bullet$, where $C_\bullet$ is a contractible
  candidate $n$-angle, $F$ is a free module in $\C$ and $F(p)_\bullet$
  is the $n$-$\Sigma$-sequence
  \begin{equation*}
    F \xrightarrow{up} F \xrightarrow{p} \cdots
    \xrightarrow{p} F \xrightarrow{p} \Sigma F 
  \end{equation*}
  Then the following hold:

  (1) $(\C,\Sigma,\nang_u)$ is an $n$-angulated category whenever $n$
  is even, or when $n$ is odd and $2p =0$ in $R$.

  (2) If $\nang$ is any $n$-angulation of $(\C, \Sigma)$, then
  $\nang = \nang_u$ for some unit $u$ in $R$.

  (3) $\nang_u = \nang_v$ if and only if $up=vp$ in $R$.
\end{theorem}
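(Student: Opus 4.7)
For part (1), there are two natural routes. The quicker one is the hint in the preceding discussion: multiplication by the unit $u$ on each free $R$-module defines a natural automorphism of the identity functor $\Sigma = \id_\C$, and \cite[Proposition 3.4]{GKO} produces a new $n$-angulation by twisting the original $n$-angulation $\nang$ from Theorem \ref{thm:main} by this natural automorphism; the twisted class coincides with $\nang_u$. A more pedestrian alternative is to repeat the proofs of Propositions \ref{prop:N1}, \ref{prop:N2} and \ref{prop:N3/N4} essentially verbatim with $p$ replaced by $up$ in the first position of $F(p)_\bullet$: since $up$ still generates $\mathfrak{m}$, every matrix reduction via Remark \ref{rem:facts}(3), every exactness/periodicity argument, and every contracting-homotopy construction carries over with only cosmetic adjustments.

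The main new content is part (2). Let $\nang$ be any $n$-angulation of $(\C, \Sigma)$. By axiom (N1)(c), there is an $n$-angle $X_\bullet \in \nang$ whose first morphism is $p \colon R \to R$; by \cite[Proposition 1.5(a)]{GKO} it is exact, and since $\Sigma = \id$ it is an exact periodic sequence of free $R$-modules. Iterating Remark \ref{rem:facts}(3) to split off trivial summands wherever a map carries a unit entry (and using closure of $\nang$ under direct summands), one obtains $X'_\bullet \in \nang$ with all maps minimal; the first map remains $p$, because $R$ is indecomposable and any trivial summand meeting the first position would force a unit entry in $\alpha_1$, contradicting $\alpha_1 = p$. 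The argument from the proof of Proposition \ref{prop:N1} then forces every module of $X'_\bullet$ to be isomorphic to $R$ and every map to be multiplication by $a_i p$ for some unit $a_i \in R$ (with $a_1 = 1$). Setting $u \ceq a_2 a_3 \cdots a_n$, one writes down explicit units $\varphi_1, \dots, \varphi_n \in R$ (depending only on the $a_i$) that constitute an isomorphism between $X'_\bullet$ and the $n$-$\Sigma$-sequence
\begin{equation*}
  R \xrightarrow{up} R \xrightarrow{p} \cdots \xrightarrow{p} \Sigma R.
\end{equation*}
Hence this latter sequence lies in $\nang$. Closure of $\nang$ under direct sums, isomorphisms, rotations (by N2) and direct summands, combined with Remark \ref{rem:facts}(4), yields $\nang_u \subseteq \nang$; the uniqueness statement \cite[Proposition 1.5(c)]{GKO} then upgrades this inclusion to $\nang = \nang_u$.

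For part (3), the $(\Leftarrow)$ direction is immediate: if $up = vp$, the sequences called $F(p)_\bullet$ in the definitions of $\nang_u$ and $\nang_v$ literally coincide, so $\nang_u = \nang_v$. For the converse, assume $\nang_u = \nang_v$. Then the generating $n$-angle $R \xrightarrow{up} R \xrightarrow{p} \cdots \xrightarrow{p} \Sigma R$ of $\nang_u$ also lies in $\nang_v$, and since it admits no trivial direct summand it must be isomorphic to the analogous generator $R \xrightarrow{vp} R \xrightarrow{p} \cdots \xrightarrow{p} \Sigma R$ of $\nang_v$. Writing the isomorphism as a tuple of units $(\varphi_1, \ldots, \varphi_n)$ and comparing the $n$ commutativity relations modulo $\mathfrak{m}$ gives $\bar\varphi_1 = \cdots = \bar\varphi_n$ together with $\bar\varphi_2 \bar u = \bar v \bar\varphi_1$, hence $\bar u = \bar v$ in $k = R/\mathfrak{m}$. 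This means $u - v \in \mathfrak{m}$, so $(u-v)p \in \mathfrak{m}^2 = 0$ and $up = vp$. The main obstacle in the whole argument is the classification step in part (2), which is what allows a canonical unit $u \in R^\times/(1 + \mathfrak{m})$ to be extracted from any $n$-angulation of $(\C, \Sigma)$.
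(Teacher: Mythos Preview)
Your proposal is correct and follows the paper's overall architecture, but with two genuine differences worth recording.

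In part~(2), after reducing to a minimal $n$-angle beginning with $p\colon R\to R$, you invoke the elementary rank argument from the proof of Proposition~\ref{prop:N1} (the long exact homology sequence coming from $0\to R/\m\to R\to R/\m\to 0$) to conclude that every term has rank one. The paper instead interprets the minimal $n$-angle as a minimal free resolution of $k=R/\m$ and appeals to the structure theory of \cite{Avramov,Eisenbud,Gulliksen} on bounded Betti numbers over complete intersections to force constant rank one. Your route is more self-contained and avoids the external references entirely; the paper's detour through hypersurface theory is unnecessary here. One small point: your justification that ``the first map remains $p$'' only rules out trivial summands at positions $(1,2)$; you should also note that exactness gives $\operatorname{Im}\alpha_n=\ker p=\m$ and $\ker\alpha_2=\operatorname{Im} p=\m$, so $\alpha_n$ and $\alpha_2$ are already minimal as well, whence no splitting touches positions $1$ or $2$.

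In part~(3), you argue that the $u$-generator, lying in $\nang_v$ and having all terms of rank one, must be isomorphic to the $v$-generator, and then read off $\bar u=\bar v$ in $R/\m$ from the commutativity relations. The paper takes a different tack: it uses axiom (N3) directly to complete the partial morphism $(u,v)$ between the two generators and obtains the chain $u(vp)=u(w_3p)=\cdots=u(up)$, giving $up=vp$ immediately. Both arguments are short; the paper's avoids the rank-counting step but yours makes the role of the residue field more transparent.
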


\begin{proof}
  (1) The proofs of Proposition \ref{prop:N1}, Proposition
  \ref{prop:N2}, Lemma \ref{lem:cone} and Proposition \ref{prop:N3/N4}
  all carry over almost verbatim. Only the proof of the rotation axiom
  (N2) needs some clarification.

  Rotating (left) the sequence
  \begin{equation*}
    F \xrightarrow{up} F \xrightarrow{p} \cdots
    \xrightarrow{p} F \xrightarrow{p} \Sigma F 
  \end{equation*}
  once gives the sequence 
  \begin{equation*}
    F \xrightarrow{p} F \xrightarrow{p} \cdots
    \xrightarrow{p} F \xrightarrow{up} \Sigma F 
  \end{equation*}
  that is, the last map (map $n$) becomes multiplication by $up$
  instead of the first one. By rotating twice, map $n - 1$ becomes
  multiplication by $up$, and so on. The point is that all these
  rotations are isomorphic to the original sequence: the diagram
  \begin{center}
    \begin{tikzpicture}
      \diagram{d}{2.5em}{2.5em}{
        F & F & \cdots & F & \Sigma F\\
        F & F & \cdots & F & \Sigma F\\
      };

      \path[->,midway,font=\scriptsize]
        (d-1-1) edge node[above]{$up$} (d-1-2)
                     edge node[right]{$u$} (d-2-1)
        (d-1-2) edge node[above]{$p$} (d-1-3)
                     edge node[right]{$1$} (d-2-2)
        (d-1-3) edge node[above]{$p$} (d-1-4)
        (d-1-4) edge node[above]{$p$} (d-1-5)
                     edge node[right]{$1$} (d-2-4)
        (d-1-5) edge node[right]{$u$} (d-2-5)
        (d-2-1) edge node[above]{$p$} (d-2-2)
        (d-2-2) edge node[above]{$p$} (d-2-3)
        (d-2-3) edge node[above]{$p$} (d-2-4)
        (d-2-4) edge node[above]{$up$} (d-2-5);
    \end{tikzpicture}
  \end{center}
  displays an isomorphism after rotating once.

  (2) Let $\nang$ be an $n$-angulation of $(\C, \Sigma)$. By axiom
  (N1)(c), the map $R \xrightarrow{p} R$ in $\C$ is the first map of
  some $n$-angle
  \begin{equation*}
    A_\bullet \colon \quad R \xrightarrow{p} R
    \xrightarrow{\alpha_2} A_3 \xrightarrow{\alpha_3} \cdots
    \xrightarrow{\alpha_{n - 1}} A_n \xrightarrow{\alpha_n} \Sigma R 
  \end{equation*}
  in $\nang$. As in the proof of Proposition \ref{prop:N1}, we may
  assume that all the maps are minimal (i.e.\ that $\Im \alpha_i
  \subseteq \m A_{i+1}$): otherwise we can split off trivial
  $n$-$\Sigma$-sequences. Now consider this $n$-angle as a periodic
  exact sequence of free $R$-modules. As such, it defines a minimal
  free resolution
  \begin{equation*}
    \cdots \xrightarrow{\alpha_n} R \xrightarrow{p} R
    \xrightarrow{\alpha_2} A_3 \xrightarrow{\alpha_3} \cdots
    \xrightarrow{\alpha_{n - 1}} A_n \xrightarrow{\alpha_n} R \to k \to
    0
  \end{equation*}
  of the residue field $k = R/\m$ of $R$, since $k$ is just isomorphic
  to the image of the map $R \xrightarrow{p} R$. Consequently, the
  residue field $k$ has a minimal free resolution in which the ranks
  of the free modules are bounded. By \cite{Avramov, Eisenbud,
    Gulliksen}, this can only happen for local rings which are
  complete intersections of codimension one, that is, hypersurface
  rings. Moreover, the minimal free resolution of $k$ eventually
  becomes two-periodic with constant rank. In our situation the
  resolution is periodic from the start, and since it contains two
  consecutive terms of rank one, we see that \emph{all} the free
  modules $A_3, \dots, A_n$ must have rank one. Therefore, the
  $n$-angle $A_\bullet$ is of the form
  \begin{equation*}
    R \xrightarrow{p} R \xrightarrow{u_2p} R \xrightarrow{u_3p} \cdots
    \xrightarrow{u_{n - 1}p} R \xrightarrow{u_np} \Sigma R 
  \end{equation*}
  for some units $u_2, \dots, u_n$ in $R$ (the maps cannot be zero or
  isomorphisms, since this would contradict exactness and
  minimality). The diagram
  \begin{center}
    \begin{tikzpicture}
      \diagram{d}{3em}{4em}{
        R & R & R & \cdots & R & \Sigma R \\
        R & R & R & \cdots & R & \Sigma R \\
      };
      
      \path[->,midway,font=\scriptsize]
        (d-1-1) edge node[above]{$p$} (d-1-2)
                     edge node[right]{$1$} (d-2-1)
        (d-1-2) edge node[above]{$u_2p$} (d-1-3)
                     edge node[right]{$\prod_{i=2}^n u_i$} (d-2-2)
        (d-1-3) edge node[above]{$u_3p$} (d-1-4)
                     edge node[right]{$\prod_{i=3}^n u_i$} (d-2-3)
        (d-1-4) edge node[above]{$u_{n - 1}p$} (d-1-5)
        (d-1-5) edge node[above]{$u_np$} (d-1-6)
                     edge node[right]{$u_n$} (d-2-5)
        (d-1-6) edge node[right]{$1$} (d-2-6)
        (d-2-1) edge node[above]{$\left( \prod_{i=2}^n u_i \right) p$}
          (d-2-2)
        (d-2-2) edge node[above]{$p$} (d-2-3)
        (d-2-3) edge node[above]{$p$} (d-2-4)
        (d-2-4) edge node[above]{$p$} (d-2-5)
        (d-2-5) edge node[above]{$p$} (d-2-6);
    \end{tikzpicture}
  \end{center}
  shows that the $n$-angle is isomorphic to the $n$-$\Sigma$-sequence
  \begin{equation*}
    R \xrightarrow{up} R \xrightarrow{p} \cdots \xrightarrow{p} R
    \xrightarrow{p} \Sigma R
  \end{equation*}
  with $u = \prod_{i=2}^n u_i$, hence by axiom (N1)(a) the latter must
  also be an $n$-angle in $\nang$.

  Since $\nang$ is closed under direct sums, it must contain all
  $n$-$\Sigma$-sequences of the form
  \begin{equation*}
    F \xrightarrow{up} F \xrightarrow{p} \cdots \xrightarrow{p} F
    \xrightarrow{p} \Sigma F
  \end{equation*}
  where $F$ is a free module in $\C$: this sequence is just the direct
  sum of copies of the above $n$-angle. Moreover, we know from Lemma
  \ref{lem:contractible} that $\nang$ contains all the contractible
  candidate $n$-angles. Using axiom (N1)(a) again, we see that the
  collection $\nang$ must contain the collection $\nang_u$, and so
  $\nang = \nang_u$ by \cite[Proposition 1.5(c)]{GKO}.

  (3) Let $u$ and $v$ be units in $R$. If $up = vp$ then the
  collections $\nang_u$ and $\nang_v$ are equal by
  definition. Conversely, suppose that $\nang_u$ equals $\nang_v$. By
  axiom (N3), the solid part of the commutative diagram
  \begin{center}
    \begin{tikzpicture}
      \diagram{d}{2em}{2.5em}{
        R & R & R & \cdots & R & \Sigma R \\
        R & R & R & \cdots & R & \Sigma R \\
      };

      \path[->,midway,font=\scriptsize]
        (d-1-1) edge node[above]{$up$} (d-1-2)
                     edge node[right]{$u$} (d-2-1)
        (d-1-2) edge node[above]{$p$} (d-1-3)
                     edge node[right]{$v$} (d-2-2)
        (d-1-3) edge node[above]{$p$} (d-1-4)
                     edge[densely dashed] node[right]{$w_3$} (d-2-3)
        (d-1-4) edge node[above]{$p$} (d-1-5)
        (d-1-5) edge node[above]{$p$} (d-1-6)
                     edge[densely dashed] node[right]{$w_n$} (d-2-5)
        (d-1-6) edge node[right]{$u$} (d-2-6)
        (d-2-1) edge node[above]{$vp$} (d-2-2)
        (d-2-2) edge node[above]{$p$} (d-2-3)
        (d-2-3) edge node[above]{$p$} (d-2-4)
        (d-2-4) edge node[above]{$p$} (d-2-5)
        (d-2-5) edge node[above]{$p$} (d-2-6);
    \end{tikzpicture}
  \end{center} 
  can be completed to a morphism of $n$-angles. Commutativity of the
  squares gives (from left to right, say)
  \begin{equation*}
    u(vp) = u(w_3p) = \cdots = u(w_np) = u(up),
  \end{equation*}
  hence $vp=up$ in $R$.
\end{proof}

\begin{corollary}
  \label{cor:equivalence}
  With the notation from \emph{Theorem \ref{thm:several}}, define an
  equivalence relation on the set of units in $R$ by $u \sim v$ if and
  only if $up = vp$. Then the assignment $[u] \mapsto \nang_u$ is a
  bijective correspondence between the set of equivalence classes and
  the set of $n$-angulations of the suspended category $(\C,\Sigma)$.
\end{corollary}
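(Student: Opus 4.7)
The plan is to observe that this corollary is essentially a packaging of the three parts of Theorem \ref{thm:several} in the language of equivalence classes, so the proof reduces to checking each property of the purported bijection by quoting the appropriate part of the theorem. First I would note that the relation $u \sim v$ defined by $up = vp$ is visibly reflexive, symmetric and transitive, hence an equivalence relation on the set of units in $R$, so the quotient set is well-defined.

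Next I would verify that the assignment $[u] \mapsto \nang_u$ is well-defined and injective: if $u \sim v$, i.e.\ $up = vp$, then by Theorem \ref{thm:several}(3) we have $\nang_u = \nang_v$, and conversely $\nang_u = \nang_v$ forces $up = vp$ by the same result, so $[u] = [v]$. Surjectivity of the assignment is exactly the content of Theorem \ref{thm:several}(2), which asserts that every $n$-angulation $\nang$ of $(\C, \Sigma)$ coincides with $\nang_u$ for some unit $u \in R$. (Implicitly this is stated in the regime where $n$-angulations exist, namely $n$ even or $n$ odd with $2p=0$, as codified in Theorem \ref{thm:several}(1).) There is no genuine obstacle here; all the substantive work has already been carried out in the proof of Theorem \ref{thm:several}, and the corollary is a formal consequence.
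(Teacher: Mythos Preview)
Your proposal is correct and matches the paper's treatment: the paper states the corollary without proof because it is an immediate repackaging of parts (1)--(3) of Theorem~\ref{thm:several}, exactly as you describe.
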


Depending on the ring $R$, the number of $n$-angulations of $(\C,
\Sigma)$ can vary. We include some examples showing that there could
be infinitely many, finitely many or just one.

\begin{example}
  (1) Let $R$ be the ring $\Z/(4)$. This ring has two units,
  namely $1$ and $3$, and they are equivalent since $1 \cdot 2 = 3
  \cdot 2$ in $R$. Thus for every $n \geq 3$, the suspended category
  $(\C, \Sigma)$ admits only one $n$-angulation. The triangulated
  case, that is, the case $n=3$, is \cite[Theorem 1]{MSS}.

  More generally, let $p$ be a prime number and $R$ the ring
  $\Z/(p^2)$. Then $2p \neq 0$ in $R$ unless $p=2$, so when $p \neq 2$
  we can only consider $n$-angulations on $(\C, \Sigma)$ for even
  $n$. The number of units in $R$ is given by the Euler
  $\phi$-function: it is $\phi (p^2) = p(p-1)$, and they are (the
  congruence classes modulo $p^2$) of the integers
  \begin{center}
    \begin{tabular}{l}
      $1, \dots, p-1,$\\
      $p+1, \dots, 2p-1,$\\[2pt]
      \hspace{2mm} \vdots\\[2pt]
      $p^2-p+1, \dots, p^2-1.$
    \end{tabular}
  \end{center}
  Two units $u,v$ are congruent in $R$ (i.e.\ $up=vp$) if and only if
  they are congruent in $\Z$ modulo $p$, and the above list contains
  $p-1$ such congruence classes, namely $[1], \dots, [p-1]$.
  Consequently, for even $n$ the suspended category $(\C, \Sigma)$
  admits precisely $p-1$ different $n$-angulations.

  (2) Let $k$ be a field and $R$ the ring $k[x]/(x^2)$. If $k$ is
  infinite then $R$ contains infinitely many units, and all such units
  of the form $u$ for some $u \in k$ are incongruent. That is, if $u$
  and $v$ are nonzero different elements in $k$, then considered as
  units in $R$ they satisfy $ux \neq vx$. If in addition the
  characteristic of $k$ is two, then $2x=0$ in $R$, hence our
  construction gives $n$-angulations of the suspended category $(\C,
  \Sigma)$ for all $n \geq 3$ (we do not have to restrict to even
  $n$). To sum up: when $k$ is infinite and of characteristic two,
  then for all $n \geq 3$ the suspended category $(\C, \Sigma)$ admits
  infinitely many different $n$-angulations.
\end{example}

%
%
\section{Algebraic $n$-angulated categories}
\label{sec:algebraic}

Algebraic $n$-angulated categories where introduced in \cite{J}. These
are, as their name suggests, higher analogs of algebraic triangulated
categories.  The aim of this section is to show that, for odd $n$,
some of the $n$-angulated categories constructed in Theorem
\ref{thm:main} are not algebraic.

We recall some definitions and results from \emph{loc.\ cit.}. Let
$\E$ be an additive category. A complex
\begin{equation*}
  A_1 \to A_2 \to \cdots \to A_{n - 1} \to A_n\to 0
\end{equation*}
in $\E$ is called a \emph{right $(n - 2)$-exact sequence\footnote{We
    borrow this terminology from \cite{L}.}} if for every $A\in\E$ the
induced sequence of abelian groups
\begin{equation*}
  0 \to \Hom_\E(A_n,A) \to \Hom_\E(A_{n - 1},A) \to \cdots \to
  \Hom_\E(A_2,A) \to \Hom_\E(A_1,A)
\end{equation*}
is exact. We define \emph{left $(n - 2)$-exact sequences} dually. An
\emph{$(n - 2)$-exact sequence} is a complex which is both a right
$(n - 2)$-exact sequence and a left $(n - 2)$-exact sequence. An
$(n - 2)$-exact sequence is \emph{contractible} if it is contractible
as a complex.

A morphism of complexes in $\E$ of the form
\begin{center}
  \begin{tikzpicture}
    \diagram{d}{2.5em}{2.5em}{
      A_1 & A_2 & \cdots & A_{n - 1}\\
      B_1 & B_2 & \cdots & B_{n - 1}\\
    };

    \path[->,midway,font=\scriptsize]
      (d-1-1) edge node[above]{$\alpha_1$} (d-1-2)
                   edge node[right]{$\varphi_1$} (d-2-1)
      (d-1-2) edge node[above]{$\alpha_2$} (d-1-3)
                   edge node[right]{$\varphi_2$} (d-2-2)
      (d-1-3) edge node[above]{$\alpha_{n - 2}$} (d-1-4)
      (d-1-4) edge node[right]{$\varphi_{n - 1}$} (d-2-4)
      (d-2-1) edge node[above]{$\beta_1$} (d-2-2)
      (d-2-2) edge node[above]{$\beta_2$} (d-2-3)
      (d-2-3) edge node[above]{$\beta_{n - 2}$} (d-2-4);
  \end{tikzpicture}
\end{center}
is called an \emph{$(n - 2)$-pushout diagram} if the mapping cone 
\begin{equation*}
  A_1\xrightarrow{\left[
      \begin{smallmatrix}
        -\alpha_1\\
        \hfill \varphi_1
      \end{smallmatrix}
    \right]} A_2 \oplus B_1 \xrightarrow{\left[
      \begin{smallmatrix}
        -\alpha_2 & 0\\
        \hfill \varphi_2 & \beta_1
      \end{smallmatrix}
    \right]} \cdots \xrightarrow{\left[
      \begin{smallmatrix} 
        -\alpha_{n - 2} & 0\\
        \hfill \varphi_{n - 2} & \beta_{n - 3}
      \end{smallmatrix}
    \right]} A_{n - 1} \oplus B_{n - 2} \xrightarrow{\left[
      \begin{smallmatrix}
        \hfill \varphi_{n - 1} & \beta_{n - 2}
      \end{smallmatrix}
    \right]} B_{n - 1}\to 0
\end{equation*}
is a right $(n - 2)$-exact sequence.

An \emph{$(n - 2)$-exact structure} on $\E$ is a class $\X$
of $(n - 2)$-exact sequences, whose members are called
\emph{admissible $(n - 2)$-exact sequences}, satisfying axioms similar
to those of exact categories. The pair $(\E,\X)$ is then
called an \emph{$(n - 2)$-exact category}.

An $(n - 2)$-exact category $(\E,\X)$ is \emph{Frobenius} if
the following properties are satisfied:
\begin{itemize}
\item For each $A\in\E$ there exist admissible $(n - 2)$-exact
  sequences
  \begin{equation*}
    0 \to A \to I_1 \to \cdots \to I_{n - 2} \to B \to 0  
  \end{equation*}
  and 
  \begin{equation*}
    0 \to C \to P_1 \to \cdots \to P_{n - 2} \to A \to 0
  \end{equation*}
  such that $I_1,\dots,I_{n - 2}$ are $\X$-injective and
  $P_1,\dots,P_{n - 2}$ are $\X$-projective (note that
  $\X$-projectives and $\X$-injectives are defined as usual).
\item The classes of $\X$-projectives and of $\X$-injectives coincide.
\end{itemize}

It is shown in \cite[Theorem 5.11]{J} that the stable category of a
Frobenius $(n - 2)$-exact category $\E$ has a natural $n$-angulation
induced by the admissible $(n - 2)$-sequences in $\E$. This motivates
the following definition.

\begin{definition}
  An $n$-angulated category is \emph{algebraic} if it is equivalent to
  the stable category of a Frobenius $(n - 2)$-exact category with its
  natural $n$-angulation.
\end{definition}

Let $\C$ be a category. When convenient, we denote by $d \cdot A$ the
$d$-fold multiple of the identity morphism of $A\in\C$. If $\C$ is an
$n$-angulated category, then for each $d\neq0$ and each $A\in \C$
there is an $n$-angle
\begin{equation*}
  A \xrightarrow{d\cdot 1} A \to (A/d)_1 \to \cdots \to (A/d)_{n - 2}
  \to \Sigma A.
\end{equation*}
Note that the complex $(A/d)_\bullet$ is well-defined up to homotopy
equivalence since $n$-angles are exact.

It is known that if $\C$ is an algebraic triangulated category, then
we have $d\cdot(A/d) = 0$ for all $d\neq0$ and for all $A\in \C$. This
is shown by Schwede in \cite[Proposition 1]{S1} by using the fact that
algebraic triangulated categories are tensored over
$\der^{\mathrm{b}}(\mod \Z)$. We now give an elementary proof of a
higher analog of this property.

\begin{proposition}
  \label{prop:n-order}
  Let $\E$ be a Frobenius $(n - 2)$-exact category. Then, for all
  $d\neq 0$ the morphism $d\cdot (A/d)_\bullet$ is null-homotopic as a
  morphism of complexes in the $n$-angulated category $\sE$.
\end{proposition}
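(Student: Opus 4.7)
My plan is to lift the $n$-angle $A \xrightarrow{d \cdot 1_A} A \to (A/d)_\bullet \to \Sigma A$ to an admissible $(n - 2)$-exact sequence
\[
0 \to A \xrightarrow{d \cdot 1_A} A \xrightarrow{f_1} X_1 \xrightarrow{f_2} \cdots \xrightarrow{f_{n - 2}} X_{n - 2} \to 0
\]
in $\E$, using the correspondence underlying \cite[Theorem 5.11]{J}. The complex $(A/d)_\bullet$ is then $X_1 \xrightarrow{f_2} \cdots \xrightarrow{f_{n - 2}} X_{n - 2}$, and the connecting morphism $f_{n - 1} \colon X_{n - 2} \to \Sigma A$ furnished by the Frobenius structure closes up the $n$-angle in $\sE$. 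The goal is to construct morphisms $h_i \colon X_i \to X_{i - 1}$ in $\sE$ for $i = 2, \dots, n - 2$ satisfying the chain null-homotopy relations for the endomorphism $d \cdot 1_{(A/d)_\bullet}$.

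I would build $h_2, h_3, \ldots, h_{n - 2}$ inductively, exploiting the long exact sequences obtained by applying $\Hom_{\sE}(-, X_i)$ to the $n$-angle (which are exact by Proposition 1.5(a) of \cite{GKO}). For the base case $i = 2$: since $f_1 \circ d = 0$ in $\sE$ by consecutive composition in the $n$-angle, $d \cdot 1_{X_1}$ lies in the image of $(-) \circ f_2$, so there exists $h_2$ in $\sE$ with $h_2 \circ f_2 = d \cdot 1_{X_1}$. For the inductive step at position $i$: setting $\psi_i = d \cdot 1_{X_i} - f_i \circ h_i$ (with the convention $f_1 h_1 = 0$), a direct calculation using the inductive hypothesis together with $f_{i + 1} \circ f_i = 0$ shows $\psi_i \circ f_i = 0$ in $\sE$; by exactness $\psi_i$ factors as $h_{i + 1} \circ f_{i + 1}$, yielding the next homotopy morphism.

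The main obstacle is the closing boundary equation $f_{n - 2} \circ h_{n - 2} = d \cdot 1_{X_{n - 2}}$, which imposes an \emph{a priori} independent constraint on the inductively constructed $h_{n - 2}$. The discrepancy $\delta = f_{n - 2} \circ h_{n - 2} - d \cdot 1_{X_{n - 2}}$ satisfies $\delta \circ f_{n - 2} = 0$ in $\sE$ by a computation identical to the one above, so by exactness $\delta = \mu \circ f_{n - 1}$ for some $\mu \colon \Sigma A \to X_{n - 2}$. To cancel $\delta$, one modifies $h_{n - 2}$ by $\nu \circ f_{n - 1}$ for a suitable $\nu \colon \Sigma A \to X_{n - 3}$, which preserves the interior homotopy equations because $f_{n - 1} \circ f_{n - 2} = 0$. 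The key identity $d \cdot f_{n - 1} = 0$ in $\sE$ --- which follows from the rotation axiom (N2), since the left rotation of the $n$-angle places $\pm \Sigma d$ immediately after $f_{n - 1}$ --- is what allows this modification to close up: a careful double application of the long exact sequences combined with this identity shows that $f_{n - 1} \circ \mu$ is $d$-divisible in $\Hom_{\sE}(\Sigma A, \Sigma A)$, which in turn permits the desired decomposition of $\mu$ modulo $\ker((-) \circ f_{n - 1})$, and the residual correction term vanishes in $\sE$ yet again by $d \cdot f_{n - 1} = 0$. Working out this delicate interplay between $d$-divisibility and the exactness structure of the $n$-angulated category $\sE$ is the main technical challenge of the argument.
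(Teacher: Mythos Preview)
Your proposal has a fundamental gap: after the first sentence, you never again use the hypothesis that $\sE$ is the stable category of a Frobenius $(n-2)$-exact category. Every tool you invoke --- the long exact sequences from \cite[Proposition~1.5(a)]{GKO}, the rotation axiom (N2), the identity $d\cdot f_{n-1}=0$ --- is available in \emph{any} $n$-angulated category. If your argument worked, it would prove that $d\cdot(A/d)_\bullet$ is null-homotopic in every $n$-angulated category. But this is precisely what Theorem~\ref{thm:not_algebraic} shows to be false: for $R=\Z/(4)$, $d=2$, and odd $n$, the morphism $2\cdot(R/2)_\bullet=(2,\dots,2)$ admits no null-homotopy in $(\C,\Sigma,\nang)$. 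So your closing paragraph --- the ``delicate interplay between $d$-divisibility and the exactness structure'' needed to force $(\mu - f_{n-2}\nu)\circ f_{n-1}=0$ --- cannot be filled in; the obstruction is genuine, and it is exactly what the proposition is designed to detect.

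The paper's proof uses the Frobenius hypothesis in an essential way. It realizes the $n$-angle via an $(n-2)$-pushout diagram $\varphi_\bullet\colon I(A)_\bullet\to B_\bullet$ from an admissible sequence through $\X$-injectives $I_1,\dots,I_{n-2}$, then uses the factorization property of good $(n-2)$-pushout diagrams to produce a splitting $\psi_\bullet\colon B_\bullet\to I(A)_\bullet$ with $\psi_k\varphi_k=d\cdot 1$. The Comparison Lemma then gives a homotopy $d\cdot B_\bullet\simeq\varphi_\bullet\psi_\bullet$ in $\E$; since each $\varphi_k\psi_k$ factors through the injective $I_k$, it vanishes in $\sE$, and the restriction of this homotopy to $(A/d)_\bullet$ is the desired null-homotopy. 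The passage through injectives is the missing ingredient in your approach; without it, there is no mechanism to cancel the final discrepancy $\delta$.
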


\begin{proof}
  Firstly, by construction there is an $(n - 2)$-pushout diagram of
  admissible $(n - 2)$-exact sequences
  \begin{center}
    \begin{tikzpicture}
      \diagram{d}{2.5em}{2.5em}{
        I(A)_\bullet & 0 & A  & I_1 & \cdots & I_{n - 2} & \Sigma A & 0 \\
        B_\bullet & 0 & A & (A/d)_1 & \cdots & (A/d)_{n - 2} & \Sigma
        A & 0 \\ 
      };

      \path[->,midway,font=\scriptsize]
        (d-1-1) edge node[right]{$\varphi_\bullet$} (d-2-1)
        (d-1-2) edge (d-1-3)
        (d-1-3) edge node[above]{$\alpha_0$} (d-1-4)
                     edge node[right]{$d \cdot 1$} (d-2-3)
        (d-1-4) edge node[above]{$\alpha_1$} (d-1-5)
                     edge node[right]{$\varphi_1$} (d-2-4)
        (d-1-5) edge node[above]{$\alpha_{n - 3}$} (d-1-6)
        (d-1-6) edge node[above]{$\alpha_{n - 2}$}  (d-1-7)
                     edge node[right]{$\varphi_{n - 2}$}  (d-2-6)
        ([xshift=-0.025cm] d-1-7.south) edge[-] ([xshift=-0.025cm]
          d-2-7.north)
        ([xshift=0.025cm] d-1-7.south) edge[-] ([xshift=0.025cm]
          d-2-7.north)
        (d-1-7) edge (d-1-8)
        (d-2-2) edge (d-2-3)
        (d-2-3) edge node[above]{$\beta_0$} (d-2-4)
        (d-2-4) edge node[above]{$\beta_1$} (d-2-5)
        (d-2-5) edge node[above]{$\beta_{n - 3}$} (d-2-6)
        (d-2-6) edge node[above]{$\beta_{n - 2}$} (d-2-7)
        (d-2-7) edge (d-2-8);
    \end{tikzpicture}
  \end{center}
  which induces the $n$-angle
  \begin{equation*}
    A \xrightarrow{d\cdot 1} A \to (A/d)_1 \to \cdots \to (A/d)_{n -
      2} \to \Sigma A
  \end{equation*}
  in $\sE$, see \cite[Section~5]{J}.

  Secondly, by adding to the bottom the contractible $(n - 2)$-exact
  sequence
  \begin{equation*}
    0 \to 0 \to I_2 \to I_2 \oplus I_3 \to \cdots \to I_{n - 3} \oplus
    I_{n - 2} \to I_{n - 2} \to 0 \to 0,
  \end{equation*}
  we may assume that the above diagram is a good $(n - 2)$-pushout
  diagram in the sense of \cite[Defition-Proposition 2.14]{J}.  By the
  factorization property of good $(n - 2)$-pushout diagrams there is a
  commutative diagram
  \begin{center}
    \begin{tikzpicture}
      \diagram{d}{2.5em}{2.5em}{
        I(A)_\bullet & 0 & A  & I_1 & \cdots & I_{n - 2} & \Sigma A &
        0 \\
        B_\bullet & 0 & A & (A/d)_1 & \cdots & (A/d)_{n - 2} & \Sigma
        A & 0 \\
        I(A)_\bullet & 0 & A  & I_1 & \cdots & I_n & \Sigma A & 0 \\
      };

      \path[->,midway,font=\scriptsize]
        (d-1-1) edge node[right]{$\varphi_\bullet$} (d-2-1)
        (d-1-2) edge (d-1-3)
        (d-1-3) edge node[above]{$\alpha_0$} (d-1-4)
                     edge node[right]{$d \cdot 1$} (d-2-3)
        (d-1-4) edge node[above]{$\alpha_1$} (d-1-5)
                     edge node[right]{$\varphi_1$} (d-2-4)
        (d-1-5) edge node[above]{$\alpha_{n - 3}$} (d-1-6)
        (d-1-6) edge node[above]{$\alpha_{n - 2}$}  (d-1-7)
                     edge node[right]{$\varphi_{n - 2}$}  (d-2-6)
        ([xshift=-0.025cm] d-1-7.south) edge[-] ([xshift=-0.025cm]
          d-2-7.north)
        ([xshift=0.025cm] d-1-7.south) edge[-] ([xshift=0.025cm]
          d-2-7.north)
        (d-1-7) edge (d-1-8)
        (d-2-2) edge (d-2-3)
        (d-2-1) edge node[right]{$\psi_\bullet$} (d-3-1)
        (d-2-3) edge node[above]{$\beta_0$} (d-2-4)
        ([xshift=-0.025cm] d-2-3.south) edge[-] ([xshift=-0.025cm]
          d-3-3.north)
        ([xshift=0.025cm] d-2-3.south) edge[-] ([xshift=0.025cm]
          d-3-3.north)
        (d-2-4) edge node[above]{$\beta_1$} (d-2-5)
                     edge node[right]{$\psi_1$} (d-3-4)
        (d-2-5) edge node[above]{$\beta_{n - 3}$} (d-2-6)
        (d-2-6) edge node[above]{$\beta_{n - 2}$} (d-2-7)
                     edge node[right]{$\psi_{n - 2}$} (d-3-6)
        (d-2-7) edge[densely dashed] node[right]{$d \cdot 1$} (d-3-7)
        (d-2-7) edge (d-2-8)
        (d-3-2) edge (d-3-3)
        (d-3-3) edge node[above]{$\alpha_0$} (d-3-4)
        (d-3-4) edge node[above]{$\alpha_1$} (d-3-5)
        (d-3-5) edge node[above]{$\alpha_{n - 3}$} (d-3-6)
        (d-3-6) edge node[above]{$\alpha_{n - 2}$}  (d-3-7)
        (d-3-7) edge (d-3-8);
    \end{tikzpicture}
  \end{center}
  such that for all $k\in \{1,\dots,n - 2\}$ we have
  $\psi^k\circ\varphi^k = d\cdot 1$.  We only need to check that the
  bottom right square commutes. Indeed, we have
  \begin{align*}
   (d\cdot\beta_{n - 2})\circ\varphi_{n - 2} &= d\cdot \alpha_{n - 2},\\
   (\alpha_{n - 2}\circ\psi_{n - 2})\circ\varphi_{n - 2} &=d\cdot
     \alpha_{n - 2},\\
   (\alpha_{n - 2}\circ \psi_{n - 2})\circ \beta_{n - 3}  &= \alpha_{n
     - 2}\circ(\alpha_{n - 3}\circ\psi_{n - 3}) = 0,\intertext{and}
   (d\cdot\beta_{n - 2})\circ \beta_{n - 3}  &= 0.
  \end{align*}
  Since $[\varphi_{n - 2}\ \beta_{n-3}]\colon I_{n - 2} \oplus  
  (A/d)_{n - 3}\to (A/d)_{n - 2}$ is an epimorphism, the claim
  follows. Note that if $n = 3$ this shows that $d\cdot (A/d) = 0$ (by
  convention, $(A/d)_0 \ceq A$). In the rest of the proof we assume
  $n\geq 4$.

  Thirdly, we deuce from the commutative diagram
  \begin{center}
    \begin{tikzpicture}
      \diagram{d}{2.5em}{2.5em}{
        B_\bullet & 0 & A & (A/d)_1 & \cdots & (A/d)_{n - 2} & \Sigma
        A & 0 \\
        I(A)_\bullet & 0 & A  & I_1 & \cdots & I_{n - 2} & \Sigma A &
        0 \\
        B_\bullet & 0 & A & (A/d)_1 & \cdots & (A/d)_{n - 2} & \Sigma
        A & 0\\
      };

      \path[->,midway,font=\scriptsize]
        (d-1-2) edge (d-1-3)
        (d-1-1) edge node[right]{$\psi_\bullet$} (d-2-1)
        (d-1-3) edge node[above]{$\beta_0$} (d-1-4)
        ([xshift=-0.025cm] d-1-3.south) edge[-] ([xshift=-0.025cm]
          d-2-3.north)
        ([xshift=0.025cm] d-1-3.south) edge[-] ([xshift=0.025cm]
          d-2-3.north)
        (d-1-4) edge node[above]{$\beta_1$} (d-1-5)
                     edge node[right]{$\psi_1$} (d-2-4)
        (d-1-5) edge node[above]{$\beta_{n - 3}$} (d-1-6)
        (d-1-6) edge node[above]{$\beta_{n - 2}$} (d-1-7)
                     edge node[right]{$\psi_{n - 2}$} (d-2-6)
        (d-1-7) edge node[right]{$d \cdot 1$} (d-2-7)
        (d-1-7) edge (d-1-8)
        (d-2-1) edge node[right]{$\varphi_\bullet$} (d-3-1)
        (d-2-2) edge (d-2-3)
        (d-2-3) edge node[above]{$\alpha_0$} (d-2-4)
                     edge node[right]{$d \cdot 1$} (d-3-3)
        (d-2-4) edge node[above]{$\alpha_1$} (d-2-5)
                     edge node[right]{$\varphi_1$} (d-3-4)
        (d-2-5) edge node[above]{$\alpha_{n - 3}$} (d-2-6)
        (d-2-6) edge node[above]{$\alpha_{n - 2}$}  (d-2-7)
                     edge node[right]{$\varphi_{n - 2}$}  (d-3-6)
        ([xshift=-0.025cm] d-2-7.south) edge[-] ([xshift=-0.025cm]
          d-3-7.north)
        ([xshift=0.025cm] d-2-7.south) edge[-] ([xshift=0.025cm]
          d-3-7.north)
        (d-2-7) edge (d-2-8)
        (d-3-2) edge (d-3-3)
        (d-3-3) edge node[above]{$\beta_0$} (d-3-4)
        (d-3-4) edge node[above]{$\beta_1$} (d-3-5)
        (d-3-5) edge node[above]{$\beta_{n - 3}$} (d-3-6)
        (d-3-6) edge node[above]{$\beta_{n - 2}$} (d-3-7)
        (d-3-7) edge (d-3-8);
    \end{tikzpicture}
  \end{center}
  and the dual of \cite[Comparison Lemma 2.1]{J} that there exist a
  homotopy $h_\bullet\colon d\cdot B_\bullet\to\varphi_\bullet \circ
  \psi_\bullet$ with $h_{n - 1}\colon \Sigma A\to (A/d)_{n - 2}$ equal
    to 0.  Next, observe that $h_1 \circ \beta_0 = 0$, hence $h_1$ factors
  through $\beta_1$ as $\gamma \circ \beta_1$, say. Thus, by replacing
  $h_2$ by $h_2 + \beta_0 \circ \gamma$, we may assume that $h_1 =
  0$. Therefore $h_\bullet$ induces a homotopy $d\cdot (A/d)_\bullet
  \to \varphi_\bullet \circ\psi_\bullet$. This shows that $d\cdot
  (A/d)_\bullet$ is null-homotopic as a morphism of complexes in
  $\sE$.
\end{proof}

We now prove the main result of this section.

\begin{theorem}
  \label{thm:not_algebraic}
  Let $R$ be a commutative local ring with principal maximal ideal $\m
  = (p) \neq 0$ such that $\m^2 = 0$ and $\C$ the category of finitely
  generated free $R$-modules. Suppose moreover that $2p=0$. If $n$ is
  odd and there exists $d\in\Z$ such that $d\cdot 1_R\in \m \setminus
  \{0\}$, then the $n$-angulated category $(\C,\Sigma,\nang)$
  constructed in Theorem \ref{thm:main} is \emph{not} algebraic.
\end{theorem}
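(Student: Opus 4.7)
The plan is to derive a contradiction from Proposition~\ref{prop:n-order} applied to $A = R$ and the integer $d$ supplied by the hypothesis. By Remark~\ref{rem:facts}(1) the element $d \cdot 1_R \in \m \setminus \{0\}$ has the form $up$ for some unit $u \in R$, and the observation that $rp = 0$ forces $r \in \m$ (otherwise $r$ is a unit and $rp \neq 0$) together with the assumption $2p = 0$ yields $2 \in \m$.

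First I would identify the complex $(R/d)_\bullet$ in $\C$. The $n$-angle with first morphism $d \cdot 1_R = up \colon R \to R$ lies in $\nang$, and a direct check using the diagonal $(u, 1, u, \ldots, u)$ shows that
\begin{equation*}
  R \xrightarrow{up} R \xrightarrow{u^{-1}p} R \xrightarrow{p} R \xrightarrow{p} \cdots \xrightarrow{p} R \xrightarrow{p} \Sigma R
\end{equation*}
is such an $n$-angle, being isomorphic to $R(p)_\bullet$. Consequently $(R/d)_\bullet$ is represented, up to homotopy equivalence, by the complex $R \xrightarrow{p} R \xrightarrow{p} \cdots \xrightarrow{p} R$ of length $n - 2$, with all objects equal to $R$ and all differentials equal to multiplication by $p$.

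Next I would invoke Proposition~\ref{prop:n-order}: assuming $\C$ is algebraic, the endomorphism $d \cdot (R/d)_\bullet$, all of whose components equal multiplication by $up$, must be null-homotopic as a morphism of complexes in $\C$. For $n = 3$ the proposition directly gives $d \cdot (R/d) = 0$, i.e.\ $up = 0$, contradicting $up \neq 0$. For $n \geq 5$ odd, a null-homotopy produces elements $s_2, \ldots, s_{n - 2} \in R$ (scalars, since every endomorphism of the free module $R$ is multiplication by a ring element) satisfying
\begin{equation*}
  up = s_2 p, \qquad up = (s_i + s_{i + 1}) p \ \text{for}\ i = 2, \ldots, n - 3, \qquad up = s_{n - 2} p.
\end{equation*}
Reducing modulo $\m$ and using $2 \equiv 0$ in $R/\m$, the equations propagate the alternating pattern $s_i \equiv u$ for even $i$ and $s_i \equiv 0$ for odd $i$. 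The boundary condition $s_{n - 2} \equiv u$ is thus compatible only when $n - 2$ is even; for odd $n$ it forces $u \equiv 0 \pmod{\m}$, contradicting the fact that $u$ is a unit.

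The main subtlety is pinning down the null-homotopy concretely, but this is facilitated by the fact that every endomorphism of $R$ is scalar multiplication, so the chain-homotopy conditions reduce to linear relations in $R$ that become transparent modulo $\m$. The hypothesis $2p = 0$ enters precisely at the parity step, which explains why the argument singles out odd $n$.
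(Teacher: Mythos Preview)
Your proposal is correct and follows essentially the same route as the paper: write $d\cdot 1_R = up$, identify $(R/d)_\bullet$ with the complex $R \xrightarrow{p} \cdots \xrightarrow{p} R$, and then derive a contradiction from the null-homotopy relations using $2p=0$ together with the parity of $n$. The only cosmetic difference is in the final step: rather than propagating the recursion modulo $\m$, the paper simply adds all $n-2$ homotopy equations---the right-hand sides cancel in pairs since $2p=0$, giving $(n-2)\,up = 0$ and hence $up = 0$ when $n$ is odd.
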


\begin{proof}
  Let $d\in \Z$ be such that $d\cdot 1_R \in\m\setminus\{0\}$. Hence
  $d\cdot 1_R = up$ from some unit $u$ in $R$. The case $n = 3$ is
  clear since $d\cdot 1_R = up\neq 0$.

  Let $n\geq 4$. Since $up$ is not a unit, every $n$-angle having
  $up\colon R\to R$ as first morphism is isomorphic to the $n$-angle
  \begin{equation*}
    R\xrightarrow{p} R\xrightarrow{p} R\xrightarrow{p} \cdots
    \xrightarrow{p} R\xrightarrow{p}R.
  \end{equation*}
  Therefore, up to a contractible summand, we have
  \begin{equation*}
    (R/d)_\bullet = (R\xrightarrow{p} \cdots
    \xrightarrow{p} R).
  \end{equation*}
  Suppose that there exists a null-homotopy of $d\cdot (R/d)_\bullet =
  (up,\dots,up)$. Thus, there exist $q_1,\dots,q_{n-3}\in R$ such that
  \begin{equation*}
    up = pq_1 = q_1p + pq_2 = \cdots = q_{n - 4}p + pq_{n - 3} = q_{n
      - 3}p.
  \end{equation*}
  Given that $2p = 0$ and that $n$ is odd, by adding the above
  equalities we have $(n - 2)up = up = 0$, a contradiction. Hence the
  $n$-angulated category $(\C,\Sigma,\nang)$ is not algebraic by
  Proposition \ref{prop:n-order}.
\end{proof}

\begin{remark}
  Note that if $n$ is even, then the sequence
  $(u,0,u,\dots,0,u)\in R^{n - 3}$ gives a null-homotopy of
  $d\cdot(R/d)_\bullet = (up,\dots,up)$ in the setting of Theorem
  \ref{thm:not_algebraic}. Hence we cannot deduce from Proposition
  \ref{prop:n-order} that the $n$-angulated category
  $(\C,\Sigma,\nang)$ is not algebraic in this case.
\end{remark}

\begin{example}
  Let $R$ be the ring $\Z/(4)$. Then, the assumptions of Theorem
  \ref{thm:not_algebraic} are satisfied since we have $0\neq2\in
  (2)$. Thus, for odd $n$, the corresponding $n$-angulated category is
  not algebraic.
\end{example}

%
%
\section{Acknowledgements}

We thank Amnon Neeman for pointing out Remark \ref{rem:Neeman}, and an
anonymous referee for major improvements in the paper.

%
%

%
%
\end{document}